\theoremstyle{plain}
\newtheorem{theorem}{Theorem}[section]
\theoremstyle{plain}
\newtheorem{lemma}{Lemma}[section]
\newtheorem{definition}{Definition}[section]
\newtheorem{remark}{Remark}[section]
\renewcommand{\(}{\left(}
\renewcommand{\)}{\right)}
\renewcommand{\[}{\left[}
\renewcommand{\]}{\right]}
\newcommand{\be} {\begin{equation}}
	\newcommand{\ee} {\end{equation}}
\newcommand{\bea} {\begin{eqnarray}}
	\newcommand{\eea} {\end{eqnarray}}
\newcommand{\Bea} {\begin{eqnarray*}}
	\newcommand{\Eea} {\end{eqnarray*}}
\newcommand{\de} {\delta}
\newcommand{\grad}{\nabla}
\newcommand{\om} {\Omega}
\newcommand{\la} {\lambda}
\newcommand{\no} {\nonumber}
\newcommand{\noi} {\noindent}
\newcommand{\R}{\mathbb R}
\newcommand{\Rn}{\mathbb R^N}
\newcommand{\abs}[1]{\left\vert#1\right\vert}
\newcommand{\X}{\mathcal{X}_0^{1,2}}
\newcommand{\DX}{(\X)'}
\newcommand{\rnn}{\mathbb{R}^N}
\newcommand{\rnnn}{\iint_{\mathbb{R}^{2N}}}
\newcommand{\So}{\mathcal{S}_N}
\newcommand{\authorfootnotes}{\renewcommand\thefootnote{\@fnsymbol\c@footnote}}%
\numberwithin{equation}{section} \allowdisplaybreaks
\begin{document}
	\title[Global compactness result]{Global Compactness Result for a Brezis--Nirenberg-Type Problem Involving Mixed Local Nonlocal Operator}

\date{}
    \author[S. Chakraborty]{Souptik Chakraborty\textsuperscript{1}}
\email{soupchak9492@gmail.com}

    \author[D. Gupta]{Diksha Gupta\textsuperscript{1}}
\email{dikshagupta1232@gmail.com}

    \author[S. Malhotra]{Shammi Malhotra\textsuperscript{1}}
\email{shammi22malhotra@gmail.com}

    \author[K. Sreenadh]{Konijeti Sreenadh\textsuperscript{1}}
\address{\textsuperscript{1}Department of Mathematics, Indian Institute of Technology Delhi, Hauz Khas, Delhi-100016, India}
\email{sreenadh@maths.iitd.ac.in}

	\keywords{Global Compactness, Mixed local-nonlocal operator, Palais-Smale sequence, Profile decomposition, Coron's problem, High-energy solutions}
	
	
	\begin{abstract}
    This paper investigates the profile decomposition of Palais-Smale sequences associated with a Brezis-Nirenberg type problem involving a combination of mixed local nonlocal operators, given by  
    \begin{equation*}
    \left\{\begin{aligned}
        &-\Delta u + (-\Delta)^s u - \lambda u = |u|^{2^*-2}u \;\;\mbox{ in } \Omega,\\
        &\quad u=0\,\mbox{ in }\mathbb{R}^N\setminus \Omega.
        \end{aligned}
         \right.
    \end{equation*}

\noindent
where $\Omega\subseteq \Rn$ is a smooth bounded domain with $N \geq 3$, $s\in (0,1),\,\lambda\in\mathbb{R}$ is a real parameter and $2^* = \frac{2N}{N - 2} $ denotes the critical Sobolev exponent. As an application of the derived global compactness result, we further study the existence of positive solution of the corresponding Coron-type problem \cite{coron_topology} when $\la=0$.
		
		\noindent
		{\emph{\bf 2020 MSC:} 35B33, 35B38, 35J20, 35J61.}
	\end{abstract}

	\maketitle
	
	\section{introduction}

    We are concerned with the profile decomposition for the Palais-Smale ($(PS)$ in short) sequences for the energy functional associated to the following Brezis-Nirenberg type problem for the mixed local-nonlocal operator with critical nonlinearity on the smooth bounded domain $\om$ of $\Rn$
\begin{equation}
\tag{$\mathcal{P}_\lambda$} \label{Maineq}
\begin{cases}
-\Delta u + (-\Delta)^s u - \lambda u = |u|^{2^*-2} u \quad \text{in } \Omega,\\
u=0\text{ in }\Rn\setminus\Omega,
\end{cases}
\end{equation} 
where $\lambda \in \mathbb{R}$ is a real parameter, $N \geq 3$, $s \in (0,1)$ is the order of the fractional Laplacian $(-\Delta)^s$, and $2^* = \frac{2N}{N - 2}$ is the critical Sobolev exponent associated with the classical Laplacian. 

     \medskip


A key challenge in analyzing problems of the form \eqref{Maineq} arises from the lack of compactness in the embedding $\X(\Omega) \hookrightarrow L^{2^*}(\Omega)$, where $\X(\Omega)$ is the function space introduced in Section~\ref{Function Space}. Hence, a thorough analysis of the
fundamental causes of this compactness failure is highly beneficial. This failure is primarily attributed to the action of a certain noncompact group $G$, often referred to as "dislocations", which acts on the function space $\X(\om)$.

From a mathematical perspective, let $\mathcal{H}$ be a general Hilbert space, and consider a bounded sequence $\{u_n\} \subset \mathcal{H}$ that converges weakly to some limit $u \in \mathcal{H}$, i.e., $u_n \rightharpoonup u$. However, if the sequence does not converge strongly, meaning $u_n \not\to u$ in $\mathcal{H}$, the difference $u_n-u$ captures the failure of compactness. The study of this sequence, which characterizes the loss of compactness, is referred to as the \textit{profile decomposition}. Without loss of generality, one may assume $u \equiv 0$ and analyze the effect of a noncompact group $G$ acting on $\mathcal{H}$ through unitary operators, which plays a pivotal role in the structure of the decomposition.
More precisely, we seek a decomposition of the form
\begin{equation}\label{AbstractProfileDecomp}
    u_n = \sum_{j \in I} g_n^{(j)} \psi_j + r_n,
\end{equation}
where $I$ is an (at most countable) index set, for each $j \in I$, the sequence $\{g_n^{(j)}\} \subset G$ and diverges within the group, while the functions $\{\psi_j\}_{j \in I} \subset \mathcal{H}$ represent the set of all \textit{profiles}  $\Psi$ associated with $\{u_n\}$ is defined as
\begin{equation*}
    \Psi = \left\{ \psi_j \in \mathcal{H} \setminus \{0\}, \, j \in I; \, \psi_j = w^{(j)} - \lim_{n \to \infty} \left( g_n^{(j)} \right)^* u_n, \, w^{(j)} \in \mathcal{H},\, g_n^{(j)} \in G \right\},
\end{equation*}
where $\cdot^*$ denotes the adjoint element. Thus, $\Psi$ consists of all possible nonzero weak limits of the sequence $\{u_n\}$. When this set is nonempty, one can attempt to extract and remove all profiles from $u_n$ by reversing the transformation induced by the group elements $g_n^{(j)}$ and then investigate the asymptotic properties of the remainder sequence $\{r_n\} \subset \mathcal{H}$. Furthermore, the decomposition in \eqref{AbstractProfileDecomp} is characterzied by the following three properties:
\begin{enumerate}
  \item \textit{Nontriviality:} Each profile $\psi_j$ is nonzero.
  \item \textit{Asymptotic Orthogonality:} The dislocations $\{g_n^{(j)}\}$ and $\{g_n^{(k)}\}$, for different $j, k \in I$, become increasingly distinct, satisfying $g_n^{(j)^*} g_n^{(k)} \rightharpoonup 0$ for $j \neq k$.
\item \textit{$G$-Weak Convergence to Zero:} The remainder $r_n$ contains no further profiles and 
$G$-weakly converges to zero as $n \to \infty$ (see \cite{Tin-Fi}, Chapter~3 for definitions and related discussions).
\end{enumerate}
 The existence and uniqueness of such decompositions are well established (see \cite{Tao10, Tin-Fi}), making them a valuable tool in understanding compactness failures. However, the challenge in applying this framework to our setting lies in characterizing the noncompact structure of the group $G$. 

Motivated by the fundamental difficulty due to lack of compactness for the Sobolev embedding $H^1_0(\Omega)\xhookrightarrow{} L^{2^*}(\Omega)$, in a seminal work, Brezis-Nirenberg \cite{Bre-Nir-CPAMS} considered the perturbed equation
\begin{equation}\label{BNP}
\begin{cases}-\Delta u-\lambda u=|u|^{2^*-2} u & \text{ in } \Omega, \\ u=0 & \text{ on } \partial \Omega,\end{cases}
\end{equation}

where $\Omega\subset \Rn$ is a smooth bounded domain, $N\geq 3$, and $\lambda$ is real parameter. The key insight was to treat the linear term $\lambda u$ as a compact perturbation of the critical nonlinearity, enabling the recovery of compactness needed to apply variational tools. Their results revealed a delicate interplay between the domain geometry, the parameter $\lambda$, and the dimension $N$, with the existence of nontrivial solutions depending critically on these factors, establishing a sharp threshold for the existence of solutions in terms of the first Dirichlet eigenvalue. Subsequently, the nonlocal counterpart had been studied by Servedei-Valdinoci \cite{servadei-valdinoci_BrezisNirenbergFractional}. For establishing the existence of a positive weak solution of \eqref{BNP}, the local compactness result plays a pivotal role. They showed that below the level $\frac{1}{N}\mathcal{S}_{N}^{N/2}$ all $(PS)$-sequences (see Definition~\ref{PS-def}) for the energy functional associated to \eqref{BNP} are pre-compact, which opens the question of classifying all the ranges where the energy functional fails to satisfy the $(PS)$ condition. Struwe \cite{Struwe-Z} provided a comprehensive resolution to the above question through what is now known as the Global Compactness Theorem or the Palais–Smale Decomposition Theorem, offering a precise characterization of the loss of compactness for the $(PS)$-sequences. Struwe~\cite{Struwe-Z} showed that every associated $(PS)$-sequence (up to a subsequence) for the energy functional corresponding to \eqref{BNP} is a sum of its weak limit and finitely many weakly interacting rescaled bubbles (which are precisely positive solutions to the critical exponent problem  \eqref{eq:problem_at_infinity} in the entire space $\Rn$). His analysis demonstrated that compactness can break down due to the existence of nontrivial entire solutions to the following  \textit{limiting} problem, which remains invariant under translations and scalings:  
\begin{equation*}
\left\{\begin{array}{l}
-\Delta u=|u|^{2^*-2} u \;\; \text{ in } \mathbb{R}^N, \\
u \in \mathcal{D}^{1,2}(\mathbb{R}^N),
\end{array}\right.
\end{equation*}  
where  
\begin{equation*}
\mathcal{D}^{1,2}(\mathbb{R}^N) \coloneq \left\{u \in L^{2^*}(\mathbb{R}^N) : |\nabla u| \in L^2(\mathbb{R}^N)\right\}.
\end{equation*}  

\medskip
Building on this foundation, Yan \cite{yan_global_compact_p_lap} and Alves \cite{Alves1,alves_global_compact_p_lap} expanded the scope of profile decompositions to the $p$-Laplacian case ($2 \leq p < N$), considering both bounded domains and the entire space. In the sublinear regime ($1 < p < 2$), Mercuri and Willem \cite{mercuri_global_compact_p_lap} established global compactness results for the $p$-Laplacian with critical Sobolev terms in smooth bounded domains within $\mathbb{R}^N$. The framework was further extended to smooth Riemannian manifolds without boundary by Saintier \cite{nicolas_global_compact_manifold}. A broader generalization emerged through the work of Sandeep and Tintarev \cite{sandeep_manifold}, who studied a profile decomposition for bounded sequences in the Sobolev space $H^{1,2}(M)$, where $M$ is a noncompact manifold with bounded geometry. These contributions have been instrumental in refining our understanding of noncompactness phenomena in Sobolev embeddings across diverse geometric and functional settings.

\smallskip

The study of global compactness results for the fractional Laplacian was initiated in \cite{gerard_ps_fractional}, where the authors provided a general characterization of the compactness defects for arbitrary bounded sequences in $\mathcal{D}^{s,2}(\mathbb{R}^N)$, without assuming these sequences to be Palais-Smale sequences. This foundational result laid the groundwork for subsequent developments in the understanding of profile decompositions in nonlocal frameworks. Later, the work in \cite{Pala-Pis-Cal} refined these findings by strengthening fractional Sobolev embeddings using Morrey norms. This advancement provided a more transparent and effective approach to deriving profile decompositions for sequences in the homogeneous Sobolev space $\dot{H}^s$, as initially developed in \cite{Tin-Fi}.  Subsequently, \cite{Pala-Pis-Non} extended the profile decomposition framework specifically to Palais-Smale sequences associated with the fractional Laplacian operator. Jaffard, in \cite{Jaffard_p_frac_laplacian_global_cpt}, extended G\'erard's results \cite{gerard_ps_fractional} to the setting of fractional $p$-Laplacian spaces. For further developments on global compactness in this context, particularly for the fractional $p$-Laplacian and for radially symmetric functions in a ball $B \subset \mathbb{R}^N$, see also \cite{brasco_globalcpt_frac_p_laplacian}.


\smallskip
Global compactness for semilinear elliptic equations with Hardy potentials and critical Sobolev terms has been extensively studied (see \cite{smets_hardy,cao_singular_hardy,jin_hardy,guo_hardy,kang_hardy,Gao_hardy,kang_hardy2,filippucci_hardy, Bhakta-souptik-pucci}). However, a comprehensive description of the associated Palais-Smale sequences remained incomplete until \cite{Li-Yuan-Guo}, where the authors established global compactness results for the $p$-Laplace problem with combined Sobolev-Hardy terms in both bounded and unbounded domains, providing a full characterization of these sequences. Furthermore, a similar Palais-Smale decomposition has been established in various mathematical settings. Notable examples include Hebey and Robert \cite{emmanuel_Paneitz} for Paneitz-type operators, and El-Hamidi-Vetois \cite{hamidi_anisotropic} for anisotropic operators. Almaraz \cite{almaraz_manifolds} investigated compactness in the context of nonlinear boundary conditions, Mazumdar \cite{saikat_polyharmonic} studied polyharmonic operators on compact manifolds. Further contributions include the work of Chernysh \cite{chernysh2024struwetypedecompositionresultweighted}, who studied critical $p$-Laplace equations of the Caffarelli-Kohn-Nirenberg type in bounded domains.  Kirchhoff-type equations have been analyzed by Lei, R\u{a}dulescu, and Zhang \cite{radulescu_kirchhoff_pd} as well as Tian and Zhang \cite{zhang_frac_kirchhoff_pd}. Additionally, profile decomposition has also been extended to time-dependent operators (see \cite{peng_time_fractional_pd} and references therein).

\smallskip

Drawing inspiration from the foundational work on profile decomposition-Struwe’s analysis for the Laplacian \cite{Struwe-Z} and Palatucci-Pisante’s work on the fractional Laplacian \cite{Pala-Pis-Non}-we aim to investigate the profile decomposition of Palais-Smale sequences associated with an energy functional that incorporates both the Laplacian and fractional Laplacian operators. Since Struwe’s seminal paper \cite{Struwe-Z}, the widely recognized Global Compactness theorem in the Sobolev space $H^1$ has become a cornerstone of analysis. It has played a significant role in proving existence results across various mathematical domains, including ground state solutions for nonlinear Schrödinger equations, Yamabe-type equations in conformal geometry, problems related to prescribing $Q$-curvature, etc. In Theorem~\ref{PS_decomposition}, we establish an analogue of Struwe’s Global Compactness result within our framework, focusing on the profile decomposition of Palais-Smale sequences associated with the energy functional $I_{\lambda, s}$ defined in \eqref{Ener-Fun}. Although the space $\X(\Omega)$ exhibits noncompact translational invariance, a major challenge in our setting arises from the absence of a common noncompact scaling group action on $\X(\Omega)$, which significantly complicates the decomposition analysis.

Now, coming back to our problem \eqref{Maineq}, we observe that the equation is variational, and the associated energy functional is given by
\begin{equation}\label{energy-functional}
   I_{\lambda,s}(u)\coloneqq \frac{1}{2}\rho(u)^2-\frac{\lambda}{2}\int_{\Rn}|u|^2\,{\rm d}x-\frac{1}{2^*}\int_{\Rn} |u|^{2^*} {\rm d}x, \text{ for all }\, u\in\X(\Omega), 
\end{equation}
 
where $\rho(\cdot)$ is defined in \eqref{norm}. Note that $I_{\lambda,s}$ is well defined in $\X (\Omega)$ and $I_{\lambda,s} \in C^1\left(\X (\Omega),\R\right)$. Moreover, if $u\in \X (\Omega)$ is a weak solution of~\eqref{Maineq}, then $u$ is a critical point of $I_{\lambda,s}$ and vice versa. One of the standard tools in variational methods is to find a minimizer of the corresponding energy functional by minimizing sequences. The following definition gives the desired compactness condition for the extremizing sequences associated to the energy functional.

\begin{definition}\label{PS-def}
    A sequence $\{ u_k \} \subset \X(\Omega)$ is said to be a Palais-Smale $(PS)$ sequence for $I_{\lambda,s}$ at level $\beta$, if $I_{\lambda,s}(u_k) \to \beta$ in $\R$ and $I'_{\lambda,s}(u_k) \to 0$ in $(\X(\Omega))'$ as $k \to \infty$. The function $I_{\lambda,s}$ is said to satisfy $(PS)$ condition at level $\beta$, if every $(PS)$ sequence for $I_{\lambda,s}$ at level $\beta$ admits a convergent subsequence.
\end{definition}

The authors in \cite{biagi} studied the Brezis-Nirenberg problem for the mixed local nonlocal operators and, among other results, established the following theorem:  
\begin{theorem}[see \cite{biagi}]\label{thm:best_constant}  
For $ s \in (0,1) $ and any open set $ \Omega \subseteq \mathbb{R}^N $, the best Sobolev constant $ \mathcal{S}_{N,s} $ in the inequality \eqref{eq:mixed_inequality} coincides with $ \So $, the optimal constant in the classical Sobolev inequality. Furthermore, $ \mathcal{S}_{N,s} $ is never attained.  
\end{theorem}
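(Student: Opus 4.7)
The plan is to establish the equality $\mathcal{S}_{N,s}=\So$ by a two-sided inequality and then derive non-attainment as a direct consequence. The lower bound $\mathcal{S}_{N,s}\geq \So$ is immediate: for every $u\in\X(\Omega)$, the fractional contribution in $\rho(u)^2$ is nonnegative, so $\rho(u)^2\geq \|\nabla u\|_{L^2(\rnn)}^2\geq \So\,\|u\|_{L^{2^*}(\rnn)}^2$ by the classical Sobolev inequality. For the reverse inequality, the idea is to exhibit a concentrating minimizing sequence whose fractional part becomes negligible in the limit, exploiting the mismatched scaling between the Dirichlet energy and the Gagliardo seminorm when $s<1$.

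Concretely, fix $x_0\in\Omega$ and let $U(x)=[N(N-2)]^{(N-2)/4}(1+|x|^2)^{-(N-2)/2}$ denote the Aubin--Talenti instanton. The rescaled family $U_\lambda(x):=\lambda^{(N-2)/2}U(\lambda(x-x_0))$ satisfies, by change of variables,
\begin{equation*}
\|\nabla U_\lambda\|_{L^2}^2=\|\nabla U\|_{L^2}^2,\qquad \|U_\lambda\|_{L^{2^*}}^{2^*}=\|U\|_{L^{2^*}}^{2^*},\qquad [U_\lambda]_s^2=\lambda^{2s-2}\,[U]_s^2,
\end{equation*}
and since $s\in(0,1)$ the exponent $2s-2<0$, so the fractional seminorm vanishes as $\lambda\to\infty$. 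For $\Omega=\rnn$ this already gives $\mathcal{S}_{N,s}\leq \So$. For a proper open $\Omega$, I would localize by multiplying with a fixed cutoff $\phi\in C_c^\infty(\Omega)$ equal to $1$ on a ball centered at $x_0$, obtaining admissible test functions $\phi U_\lambda\in\X(\Omega)$. The main obstacle is then controlling the Gagliardo seminorm of the truncated bubble. Using the pointwise estimate
\begin{equation*}
|\phi(x)U_\lambda(x)-\phi(y)U_\lambda(y)|^2\leq 2\phi(x)^2|U_\lambda(x)-U_\lambda(y)|^2+2|\phi(x)-\phi(y)|^2 U_\lambda(y)^2,
\end{equation*}
the first term integrates to at most $2[U_\lambda]_s^2=O(\lambda^{2s-2})$; the second requires splitting $\rnn\times\rnn$ into the regions $|x-y|\leq 1$ (where Lipschitz regularity of $\phi$ produces an integrable factor $|x-y|^{2-N-2s}$ against $U_\lambda(y)^2$, and the concentration of $U_\lambda$ about $x_0$ together with $\phi\equiv 1$ near $x_0$ yields smallness) and $|x-y|>1$ (where the polynomial decay of $U_\lambda$ provides vanishing as $\lambda\to\infty$). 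Combined with the standard asymptotics $\|\nabla(\phi U_\lambda)\|_{L^2}^2=\|\nabla U\|_{L^2}^2+o(1)$ and $\|\phi U_\lambda\|_{L^{2^*}}^{2^*}=\|U\|_{L^{2^*}}^{2^*}+o(1)$, the mixed Rayleigh quotient of $\phi U_\lambda$ tends to $\So$, giving $\mathcal{S}_{N,s}\leq \So$.

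For the non-attainment, suppose for contradiction that some $u^*\in\X(\Omega)\setminus\{0\}$ achieves $\mathcal{S}_{N,s}$. Then
\begin{equation*}
\So\,\|u^*\|_{L^{2^*}}^2=\rho(u^*)^2=\|\nabla u^*\|_{L^2}^2+C_{N,s}[u^*]_s^2\geq \So\,\|u^*\|_{L^{2^*}}^2+C_{N,s}[u^*]_s^2,
\end{equation*}
forcing $[u^*]_s=0$. Hence $u^*$ is a.e. constant on $\rnn$; but $u^*\equiv 0$ outside $\Omega$ by admissibility, so if $\Omega\neq\rnn$ the function vanishes identically, while if $\Omega=\rnn$ the constant must be zero in order for $u^*\in L^{2^*}(\rnn)$. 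In either case $u^*\equiv 0$, contradicting the nontriviality of an extremizer. The main difficulty of the whole argument is the fractional truncation estimate, which is delicate because cutoffs do not behave as tamely against the nonlocal seminorm as they do against the local Dirichlet energy; the key is that the concentration scale $\lambda^{-1}$ is much smaller than the scale on which $\phi$ varies, so the dominant contribution to $[\phi U_\lambda]_s^2$ comes from the intrinsic seminorm of the bubble and scales as $\lambda^{2s-2}$.
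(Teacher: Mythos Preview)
The paper does not supply its own proof of this theorem; it is quoted from \cite{biagi}, and the only information the paper records about the argument is the scaling observation $[u_k]_s^2=k^{2s-2}[u]_s^2\to 0$ for $u_k(x)=k^{(N-2)/2}u(kx)$. Your lower bound $\mathcal{S}_{N,s}\geq\So$ and your non-attainment argument are correct (the stray constant $C_{N,s}$ in front of $[u^*]_s^2$ is not in the paper's definition of $\rho$, but this is harmless).

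For the upper bound, your core mechanism is the same scaling idea the paper attributes to \cite{biagi}, but your implementation is heavier than needed and has a genuine gap in low dimensions. You work with the exact Aubin--Talenti profile $U$ and bound the truncated Gagliardo seminorm through $2[U_\lambda]_s^2=2\lambda^{2s-2}[U]_s^2$; however $[U]_s=\infty$ when $N=3$ and $s\leq 1/2$ (since $\widehat{U}(\xi)\sim|\xi|^{-2}$ near the origin, so $|\xi|^{2s}|\widehat{U}|^2\sim|\xi|^{2s-4}$ is not integrable over $|\xi|<1$ in $\R^3$). Your treatment of the second term has the same issue: after using Lipschitz continuity of $\phi$ on $|x-y|\leq 1$ you are left with $C\int U_\lambda(y)^2\,{\rm d}y$, which diverges for $N=3,4$; the remark that ``$\phi\equiv 1$ near $x_0$'' does not help once $\phi$ has already been estimated away. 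To rescue the argument one must restrict to the set where $\phi(x)\neq\phi(y)$, i.e.\ where at least one point lies in the transition annulus away from $x_0$, and exploit the decay of $U_\lambda$ there; this is workable but more delicate than your sketch indicates.

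The route described in the paper (and apparently taken in \cite{biagi}) avoids all of this: pick any $u\in C_c^\infty(\R^N)$ with $\|\nabla u\|_{L^2}^2/\|u\|_{L^{2^*}}^2<\So+\varepsilon$, translate so that a neighbourhood of the origin lies in $\Omega$, and dilate by $k\to\infty$. Then the support of $u_k$ shrinks into $\Omega$, the local quotient is preserved, and $[u_k]_s^2=k^{2s-2}[u]_s^2\to 0$ with $[u]_s<\infty$ trivially since $u$ is smooth and compactly supported. No truncation, no integrability caveats.
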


A key idea in their proof was the use of rescaled functions $ u_k(x)\coloneq k^{\frac{N-2}{2}} u(kx) $, which allowed them to diminish the effect of the nonlocal term. Specifically, the Gagliardo seminorm satisfies  
$$
[u_k]_s^2 = k^{2s-2}[u]_s^2 \to 0 \quad \text{as } k \to \infty,
$$
and a similar decay holds for the $ L^2 $-norm of $ u_k $. This, combined with the compact embedding of $\X(\Omega)$ into $L^2(\Omega)$, played a crucial role in identifying the limiting profiles in the global compactness result established by Struwe \cite{Struwe-Z}. Indeed, the profiles in that case corresponded to the solutions of problem \eqref{eq:problem_at_infinity}. This observation led us to find an answer for the question of whether the space $\X(\Omega)$ admits a compact embedding into $H^s(\Omega)$ (refer to Section~\ref{sec:notations} for the definitions of these spaces). We show that the answer is affirmative, which follows from Corollary 6.19 in \cite{giovanni} (see Lemma \ref{compact_embedding} for a detailed proof). Consequently, we landed in the case Laplacian and hence established the following global compactness result:  

\begin{theorem}\label{PS_decomposition}  
Let $ \{u_k\} \subseteq \X(\Omega) $ be a Palais-Smale sequence for the functional $ I_{\lambda,s} $ at the level $ \beta $. Then there exist $ u_0 \in \X(\Omega) $, $ l \in \mathbb{N} \cup \{0\} $, and sequences $ \{(R_k^i, x_k^i)\} $ for $ 1 \leq i \leq l $ such that, up to a subsequence (still denoted by $ \{u_k\} $), the following holds:  
\begin{eqnarray}
     &(i)& \left\lVert u_k - u_0 - \sum_{i=1}^{l} U_k^{i} \right\rVert_{\mathcal{D}^{1,2}(\rnn)} \longrightarrow 0,  \notag \\  
    &(ii)& -\Delta u_0 + (-\Delta)^s u_0 - \lambda u_0 = |u_0|^{2^*-2} u_0 \quad \text{in } \Omega, \notag\\ 
    &(iii)& -\Delta U_i = |U_i|^{2^*-2} U_i \quad \text{in } \mathbb{R}^N, \quad \text{for } 1 \leq i \leq l,  \label{eq:problem_at_infinity}  \\  
    &(iv)& \left|\log \left(\frac{R_k^i}{R_k^j}\right)\right| + \frac{|x_k^i - x_k^j|}{R_k^i} \to \infty, \quad \text{for } 1 \leq i \neq j \leq l. \notag
\end{eqnarray}

\noi
Here, $ U_k^{i}(x) := (R_k^i)^{-\frac{N-2}{2}} U_i\left(\frac{x - x_k^i}{R_k^i}\right) $ for $ 1 \leq i \leq l $ solves the corresponding problem at infinity, i.e., \eqref{eq:problem_at_infinity}. Moreover, as $k\to \infty$ we have,  
\begin{equation*}
    I_{\lambda,s}(u_k) \to I_{\lambda,s}(u_0) + \sum_{i=1}^l I_{\infty}(U_i).
\end{equation*}  
\end{theorem}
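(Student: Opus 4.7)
The plan is to adapt Struwe's classical Palais-Smale decomposition, exploiting two structural features specific to the mixed setting. \emph{Globally}, the compact embedding $\X(\Omega)\hookrightarrow H^s(\Omega)$ from Lemma~\ref{compact_embedding} turns the nonlocal quadratic form into a compact perturbation on the residual $u_k-u_0$, which reduces the analysis to a purely local Palais-Smale decomposition on $\mathcal{D}^{1,2}(\mathbb{R}^N)$. \emph{Locally at concentration}, the scaling identity $[U_k^i]_s^2=(R_k^i)^{2-2s}[U_i]_s^2$ with $R_k^i\to 0$ and $s<1$ will force the nonlocal energy of each extracted bubble to vanish in the limit, so that the limiting profiles solve the purely local equation \eqref{eq:problem_at_infinity}.

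The opening steps are standard. From the combination $I_{\lambda,s}(u_k)-\tfrac{1}{2^*}\langle I'_{\lambda,s}(u_k),u_k\rangle$ one controls $\rho(u_k)^2-\lambda\|u_k\|_2^2$, and the continuous embedding $\X(\Omega)\hookrightarrow L^2(\Omega)$ yields boundedness of $\{u_k\}$ in $\X(\Omega)$. Passing to a subsequence, $u_k\weak u_0$ weakly in $\X(\Omega)$ and strongly in $L^2(\Omega)$ and $H^s(\Omega)$ by Lemma~\ref{compact_embedding}; testing $\langle I'_{\lambda,s}(u_k),\varphi\rangle=o(1)$ against $\varphi\in C_c^\infty(\Omega)$ yields conclusion (ii). Now set $v_k^{(1)}:=u_k-u_0$, extended by zero outside $\Omega$. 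A Brezis-Lieb splitting in the $L^{2^*}$, $L^2$, $H^s$, and $\mathcal{D}^{1,2}$ norms, together with $\|v_k^{(1)}\|_{L^2}+[v_k^{(1)}]_s\to 0$, gives
\begin{equation*}
\tfrac{1}{2}\|\nabla v_k^{(1)}\|_2^2-\tfrac{1}{2^*}\|v_k^{(1)}\|_{2^*}^{2^*} = \beta-I_{\lambda,s}(u_0)+o(1),
\end{equation*}
and for every $\varphi\in C_c^\infty(\mathbb{R}^N)$, $\int \nabla v_k^{(1)}\cdot\nabla\varphi - \int |v_k^{(1)}|^{2^*-2}v_k^{(1)}\varphi\to 0$; density of $C_c^\infty$ in $\mathcal{D}^{1,2}(\mathbb{R}^N)$ then shows that $v_k^{(1)}$ is an honest Palais-Smale sequence at level $\beta-I_{\lambda,s}(u_0)$ for the purely local functional $I_\infty$.

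If $v_k^{(1)}\to 0$ in $\mathcal{D}^{1,2}(\mathbb{R}^N)$ we stop with $l=0$. Otherwise, Lions' concentration-compactness principle applied to the bounded measures $|\nabla v_k^{(1)}|^2$ yields sequences $x_k^1\in\overline{\Omega}$ and $R_k^1>0$ such that $w_k^1(x):=(R_k^1)^{(N-2)/2}\, v_k^{(1)}(R_k^1 x+x_k^1)$ has a nontrivial weak limit $U_1\neq 0$ in $\mathcal{D}^{1,2}(\mathbb{R}^N)$; a tightness argument using boundedness of $\Omega$ rules out bounded or divergent scales, forcing $R_k^1\to 0$. Testing the equation for $u_k$ against rescaled bumps $\varphi_k(x)=(R_k^1)^{-(N-2)/2}\varphi((x-x_k^1)/R_k^1)$, the gradient and critical-nonlinearity contributions are scale-invariant, whereas Cauchy-Schwarz together with $[\varphi_k]_s^2=(R_k^1)^{2-2s}[\varphi]_s^2$ and $\|\varphi_k\|_2^2=(R_k^1)^{2}\|\varphi\|_2^2$ make the nonlocal and linear terms vanish; hence $U_1$ satisfies \eqref{eq:problem_at_infinity}. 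Setting $U_k^1(x):=(R_k^1)^{-(N-2)/2}U_1((x-x_k^1)/R_k^1)$ and $v_k^{(2)}:=v_k^{(1)}-U_k^1$, a further Brezis-Lieb accounting shows that $v_k^{(2)}$ is again an asymptotically local Palais-Smale sequence at level $\beta-I_{\lambda,s}(u_0)-I_\infty(U_1)$, with the $\mathcal{D}^{1,2}$-norm dropping by $\|U_1\|_{\mathcal{D}^{1,2}}^2+o(1)$. One iterates; since every nontrivial profile satisfies $I_\infty(U_j)\geq\tfrac{1}{N}\So^{N/2}$, the process terminates after $l\leq N(\beta-I_{\lambda,s}(u_0))/\So^{N/2}$ steps, yielding (i), (iii), and the final energy identity. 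The asymptotic orthogonality (iv) then follows by the classical contradiction: if it failed for some $i\neq j$, the two rescalings of $v_k^{(j)}$ would produce equivalent nonzero weak limits, forcing $U_i$ and $U_j$ to have been absorbed in the same extraction step, contrary to the construction.

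The main obstacle is precisely the one that motivated the theorem: the mixed operator $-\Delta+(-\Delta)^s$ carries no common nontrivial scaling invariance, so there is no single noncompact group $G$ governing an abstract decomposition of the form \eqref{AbstractProfileDecomp}. Both ingredients above are essential to circumvent this difficulty: the compactness $\X(\Omega)\hookrightarrow H^s(\Omega)$ cleanly removes the nonlocal term from the residual, and the local scaling $R_k^i\to 0$ annihilates it on each bubble. The most delicate technical work will lie in verifying that the Brezis-Lieb type splitting respects both quadratic forms simultaneously, and that at every iteration the concentration scales can be selected with $R_k^i\to 0$ rather than staying bounded - which is exactly what prevents half-space or partial bubbles from appearing as profiles in the limit.
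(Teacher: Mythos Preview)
Your proposal is correct and follows essentially the same route as the paper. Both arguments hinge on the same key observation: the compact embedding $\X(\Omega)\hookrightarrow H^s(\Omega)$ forces $[v_k^{(1)}]_s\to 0$, so the residual $v_k^{(1)}=u_k-u_0$ is a Palais--Smale sequence for the purely local functional $I_\infty$ on $H^1_0(\Omega)$, at which point the problem is exactly Struwe's. The only difference is packaging: after this reduction the paper invokes Struwe's extraction lemma \cite[Lemma~3.3]{Struwe-VM} as a black box and cites \cite{bahri_coron_topology} for (iv), whereas you re-derive the bubble extraction by hand via concentration--compactness and re-test the \emph{original} mixed equation against rescaled bumps, using the scaling $[\varphi_k]_s^2=(R_k^1)^{2-2s}[\varphi]_s^2\to 0$ to kill the nonlocal term. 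That additional scaling argument is consistent but redundant once the reduction to $I_\infty$ is in place; and the ``delicate'' simultaneous Brezis--Lieb splitting you anticipate is in fact immediate, since $[v_k^{(1)}]_s\to 0$ makes the nonlocal Brezis--Lieb identity trivial.
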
  

\smallskip
As established in Theorem \ref{thm:best_constant}, the best constant in the associated Sobolev inequality is not attained; that is, ground state solutions for the critical problem under consideration do not exist. In such scenarios, profile decomposition plays a pivotal role by identifying the precise levels at which compactness is lost. It not only provides a clearer understanding of the concentration phenomena but also delineates the energy range where one must search for higher energy solutions.

Motivated by this insight, the pursuit of high energy solutions becomes a natural direction. There are two widely adopted strategies to overcome the loss of compactness: one involves introducing suitable perturbations to the equation, while the other exploits variations in the topology or geometry of the domain. The latter approach gives rise to the celebrated Coron problem, which examines how domain topology can be leveraged to recover compactness and hence ensure the existence of solutions. A vast body of literature explores these techniques using the framework of profile decomposition to prove the existence and multiplicity of solutions; see, for instance, \cite{kaniska_multiplicity_p_lap, clapp_multiple_soln, clapp_entire_nodal, sergio_multiple_soln, jerome_multiple_soln, alessio_multiple_soln_fractional, zhang_multiple_soln_sobolev_hardy}.

\smallskip
The classical Coron's problem \cite{coron_topology}, introduced in 1984, addresses critical elliptic equations in domains with certain geometric features. It asserts that if there exists a point $x_0\in\R^N$ and radii $R_2 > R_1 > 0$ satisfying 
\begin{equation*}
\left\{R_1 \leq\left|x-x_0\right| \leq R_2\right\} \subset \Omega, \quad\left\{\left|x-x_0\right| \leq R_1\right\} \not \subset \bar{\Omega}.
\end{equation*}
Then the problem
\begin{equation*}
\begin{cases}
-\Delta u=u^{\frac{N+2}{N-2}} & \text{in $\Omega$,} \\
\,\, u>0 & \text{in $\Omega$},\\
\,\, u=0 & \text{on $\partial\Omega$,}
\end{cases}
\end{equation*}
admits at least one solution, provided the ratio $R_2/R_1$ is sufficiently large \cite{coron_topology}. Subsequently, in a landmark paper \cite{bahri_coron_topology}, Bahri and Coron strengthened this result using topological methods, proving the existence of a solution under the condition that $H_m(\Omega, \mathbb{Z}_2) \neq 0$ for some $m > 0$, where $H_m$ denotes the homology group of dimension $m$ of $\Omega$ with $\mathbb{Z}_2$ coefficients. Further extensions to contractible domains were developed in \cite{dancer_topology, ding_topology, passaseo_topology}.

Extensions of the Coron problem have also been investigated for various nonlinear operators. The case of the $p$-Laplacian is treated in \cite{mercuri_coron_p_lap}, while the fractional Laplacian case is considered in \cite{secchi_coronpb_fractional}. For systems like the Lane–Emden system, analogous results are discussed in \cite{jin_coron_lane_emden}.

In light of the profile decomposition established in Theorem \ref{PS_decomposition} for the critical problem \eqref{Maineq}, we turn our attention to studying the corresponding Coron-type result. By constructing appropriate variational settings and employing concentration-compactness techniques, we obtain the following existence theorem for high energy solutions:

\begin{theorem}\label{thm:high_energy_soln}
Let $\Omega \subset \mathbb{R}^N$ be a bounded domain satisfying:
\begin{align}
(i)& ~\left\{x \in \mathbb{R}^N : R_1 < |x| < R_2 \right\} \subset \Omega,\label{eq:domain_property_1}\\
(ii)& ~\left\{x \in \mathbb{R}^N : |x| < R_1 \right\} \not\subset \overline{\Omega},\label{eq:domain_property_2}
\end{align}
for some $0 < R_1 < R_2 < +\infty$. Then, there exists $N_0 \in \mathbb{N}$ such that for all $N \geq N_0$, and $\frac{R_2}{R_1}$ sufficiently large depending on $N$, there exists a positive solution $u\in\X (\om)$ to \eqref{Maineq} with $\lambda = 0$, satisfying
$$
\frac{1}{N} \So^{N/2} < I_{0,s}(u) < \frac{2}{N} \So^{N/2}.
$$
\end{theorem}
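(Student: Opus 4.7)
The proof follows the classical Coron-Bahri variational strategy, adapted to the mixed local-nonlocal setting. The main ingredient is Theorem~\ref{PS_decomposition}: if $\{u_k\}$ is a $(PS)$-sequence for $I_{0,s}$ at a level $\beta \in \bigl(\tfrac{1}{N}\So^{N/2},\tfrac{2}{N}\So^{N/2}\bigr)$, then the identity $\beta=I_{0,s}(u_0)+l\cdot \tfrac{1}{N}\So^{N/2}$ provided by the decomposition, together with the fact that any non-trivial weak solution $u_0$ of \eqref{Maineq} with $\lambda=0$ has energy at least $\tfrac{1}{N}\So^{N/2}$ (a direct consequence of Sobolev on the Nehari manifold), forces $l=0$ and $u_k\to u_0$ strongly. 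Consequently, PS compactness is automatic in this energy window, and it suffices to construct a min-max critical value $c$ of $I_{0,s}$ lying strictly inside it.

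For the upper bound I would use truncated Talenti instantons. Pick $R\in(R_1,R_2)$ and a smooth cutoff $\eta\in C_c^\infty(\{R_1<|x|<R_2\})$ with $\eta\equiv 1$ near $\{|x|=R\}$, and for $\sigma\in S^{N-1}$ set $u_{\eps,\sigma}(x)=\eta(x)U_{\eps,R\sigma}(x)$, where $U_{\eps,y}$ denotes the standard Aubin-Talenti bubble centered at $y$. Classical estimates give
\[
\|\nabla u_{\eps,\sigma}\|_2^2 = \So^{N/2}+O(\eps^{N-2}),\qquad \|u_{\eps,\sigma}\|_{2^*}^{2^*} = \So^{N/2}+O(\eps^N),
\]
and a rescaling yields
\[
[u_{\eps,\sigma}]_s^2 = \eps^{2-2s}[U]_s^2 + \text{cut-off corrections}.
\]
The quantity $[U]_s$ is finite only when $U\in L^2(\mathbb{R}^N)$, i.e.\ when $N\ge 5$, and this is the origin of the dimension restriction $N\ge N_0$. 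Projecting onto the Nehari manifold by the unique $t_{\eps,\sigma}>0$ with $t_{\eps,\sigma}u_{\eps,\sigma}\in\mathcal{N}$ and using the identity $I_{0,s}(v)=\tfrac{1}{N}\rho(v)^2$ on $\mathcal{N}$, one obtains
\[
\max_{\sigma\in S^{N-1}} I_{0,s}(t_{\eps,\sigma}u_{\eps,\sigma}) = \tfrac{1}{N}\So^{N/2}+O(\eps^{2-2s})+O(\eps^{N-2}) < \tfrac{2}{N}\So^{N/2}
\]
for every sufficiently small $\eps>0$.

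To force the min-max value strictly above $\tfrac{1}{N}\So^{N/2}$, I would exploit the topology of $\Omega$ via the barycenter map $\beta(u):=\bigl(\int_\Omega x\,|u|^{2^*}\,dx\bigr)\big/\bigl(\int_\Omega |u|^{2^*}\,dx\bigr)$ on the positive Nehari cone $\mathcal{N}_+$, and define
\[
c:=\inf_{h\in\mathcal{H}}\,\max_{\sigma\in S^{N-1}} I_{0,s}(h(\sigma)),
\]
where $\mathcal{H}$ is the class of continuous $h:S^{N-1}\to\mathcal{N}_+$ such that $\beta\circ h$ is homotopic in $\mathbb{R}^N\setminus\{|x|\le R_1\}$ to the embedding $\sigma\mapsto R\sigma$. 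For $R_2/R_1$ large (depending on $N$) and $\eps$ small, the explicit family $\sigma\mapsto t_{\eps,\sigma}u_{\eps,\sigma}$ belongs to $\mathcal{H}$, yielding $c<\tfrac{2}{N}\So^{N/2}$. Since every $v\in\mathcal{N}_+$ satisfies $I_{0,s}(v)\ge\tfrac{1}{N}\So^{N/2}$, we have $c\ge\tfrac{1}{N}\So^{N/2}$; if equality held, an Ekeland-type argument would produce a $(PS)$-sequence at level $\tfrac{1}{N}\So^{N/2}$ of the form $h_n(\sigma_n)$ with $h_n\in\mathcal{H}$. By Theorem~\ref{PS_decomposition} this sequence is a single bubble concentrating at some point of $\overline{\Omega}$, so $\beta(h_n(\sigma))$ collapses to a point-valued map in the limit, contradicting the non-trivial homotopy class of $\beta\circ h_n$ in $\mathbb{R}^N\setminus\{|x|\le R_1\}$ that is guaranteed by \eqref{eq:domain_property_2}.

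With $c$ located strictly inside $\bigl(\tfrac{1}{N}\So^{N/2},\tfrac{2}{N}\So^{N/2}\bigr)$, a standard deformation argument on $\mathcal{N}_+$ together with the automatic PS compactness described above produces a critical point $u\in\X(\Omega)$ of $I_{0,s}$ at level $c$. Positivity is obtained by re-running the entire construction with $u_+^{2^*-1}$ in place of $|u|^{2^*-2}u$ and invoking the strong maximum principle for the mixed operator $-\Delta+(-\Delta)^s$. The main obstacle will be reconciling two competing smallness conditions on $\eps$: it must be small enough to beat the fractional correction $O(\eps^{2-2s})$, while simultaneously $R_2/R_1$ must be large enough (depending on $N$) for the barycenter of each truncated bubble to lie in a small neighbourhood of its center, which is the precise origin of the size condition in the statement; the dimension restriction $N\ge N_0$ comes from requiring the Talenti profile to have a finite fractional Gagliardo seminorm.
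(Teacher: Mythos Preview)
Your plan matches the paper's in spirit (profile decomposition for PS-compactness in the energy window, truncated Talenti bubbles for the upper bound, a barycenter map to exploit the topology), but the min-max structure you set up and your account of the dimension restriction diverge from the paper and leave a real gap.

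The paper argues by contradiction and works with a \emph{two-parameter} family $u_t^\sigma=U[t\sigma,1-t]$, $(t,\sigma)\in[0,1)\times\mathbb{S}^{N-1}$, truncated to $\Omega$. At $t=0$ this family collapses to the single \emph{non-concentrated} bubble $u_0=U[0,1]$ regardless of $\sigma$; it is exactly this collapse that, after the deformation flow and the center-of-mass map, manufactures a contraction of $\mathbb{S}^{N-1}$ inside $\bar\Omega$ and contradicts \eqref{eq:domain_property_2}. Because $u_0$ has fixed scale, its Gagliardo seminorm cannot be made small by rescaling, and keeping the whole family below the second level requires the quantitative inequality $[u_0]_s^2<(2^{2/N}-1)\|\nabla u_0\|_2^2$, which the paper verifies only for $N$ large via Stirling asymptotics (Lemma~\ref{lem:crucial_lemma}). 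This---not the mere finiteness of $[U]_s$---is the actual source of $N\ge N_0$; your claim that $N_0$ is ``essentially $5$'' would yield a strictly stronger theorem, a point the authors themselves flag as open in the remark following Theorem~\ref{thm:high_energy_soln}.

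Your sketch appears to sidestep Lemma~\ref{lem:crucial_lemma} by using only concentrated bubbles (fixed small $\eps$) and a one-parameter class $\mathcal{H}$ of maps $\mathbb{S}^{N-1}\to\mathcal{N}_+$ constrained through a barycenter-homotopy condition. The gap is that $\mathcal{H}$ is not invariant under the gradient flow of $I_{0,s}$: nothing prevents the flow from pushing $\beta(h(\sigma))$ into $\{|x|\le R_1\}$, so the deformation lemma does not show that your $c$ is a critical value. Your Ekeland step extracts a \emph{single} PS-sequence $h_n(\sigma_n)$ and says nothing about the homotopy class of the full map $\beta\circ h_n$, so the contradiction you describe does not close. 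Repairing this typically means passing to a filling class (continuous maps $D^N\to\mathcal{N}_+$ with the bubble family as boundary data), which \emph{is} flow-invariant; but then the upper bound requires an explicit filling, and any such filling must pass through a non-concentrated configuration in the interior of $D^N$---which brings you straight back to the estimate in Lemma~\ref{lem:crucial_lemma} and the paper's large-$N$ hypothesis.
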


The proof of Theorem \ref{thm:high_energy_soln} proceeds by contradiction. We consider a two-parameter family of functions and $R\gg 1$ defined as 

\begin{equation}\mathcal{A}_R\coloneqq \left\{ \phi_R U[t\sigma,(1-t)]\,:\, \phi_R \in C^{\infty}_c(\Rn),\,t \in [0,1) \mbox{ and } \sigma \in \mathbb{S}^{N-1} \right\},\label{familyOfBubbles:Intro}\end{equation}where $u_0$ denotes the standard Aubin-Talenti bubble defined in \eqref{eq:AubinTalentiBubbles}, $\phi_R$ is a suitable cut-off function, and the parameters $t$ and $\sigma$ control the concentration and translation of the bubbles within the domain. We then estimate the associated energy of this class and show that for sufficiently large dimension $N$ and an appropriately large ratio $\frac{R_2}{R_1}$, the energy remains strictly below the threshold $2^{2/N} \So$ (see Lemma \ref{lem:crucial_lemma}). This estimate plays a pivotal role in establishing the existence of a high-energy solution, as it allows us to deform the bubble family so that its energy approaches the first critical level. Assuming by contradiction that no positive nontrivial solution exists, this level $2^{2/N} \So$ becomes the next critical threshold after $\So$ at which the Palais–Smale condition breaks down.
Thus, applying a version of the deformation lemma, we construct a homotopy retracting the unit sphere $\mathbb{S}^{N-1} \subset \Omega$ continuously to a single point within $\Omega$. However, such a deformation contradicts the topological condition \eqref{eq:domain_property_2}. 
\smallskip
\begin{remark}
   \begin{enumerate}
    \item One of the natural limitations of our result lies in the assumption that the dimension $N$ must be sufficiently large in order to guarantee the existence of solutions. While this assumption is crucial for our analytical estimates to hold, numerical simulations performed using MATLAB suggest that Lemma~\ref{lem:crucial_lemma} may fail for lower dimensions. This raises an intriguing open question: to precisely determine the threshold dimension beyond which the energy associated with the localized family of Aubin-Talenti bubbles defined in \eqref{familyOfBubbles:Intro} falls strictly below the critical level $2^{2/N}\So$.
    
    \item An alternative approach to relax the large dimension assumption is to introduce a small parameter $\varepsilon = \varepsilon(N)$ in front of the fractional Laplacian operator. By tuning $\varepsilon$ appropriately, one can ensure that the energy lies below the second critical level $2^{2/N} \So$. This perturbative strategy could allow the extension of the existence result to lower-dimensional settings, opening up further avenues of investigation.

    \item Next, we aim to investigate the extent to which the geometric assumptions on $\Omega$ can be relaxed while still ensuring the existence of at least one positive solution to $\eqref{Maineq}$ with $\lambda = 0$.
\end{enumerate}
        
\end{remark}

\subsection{Notations} Throughout this paper, we make use of the following terminology unless, in a specific scenario, it's stated otherwise.

\begin{enumerate}
\item $\mathbb{N},\,\R$ denote the set of natural numbers and real numbers, respectively. Moreover we use $\mathbb{N}_0\coloneqq\mathbb{N}\cup\{0\}$ and $\R^+\coloneqq\{r\in\R\colon r>0\}=\(0,\infty\)$. For a generic set $B$ and $N\in\mathbb{N}$, the cartesian $N$-product is the set $B^N$.\vspace{1mm}

\item $B(x,r)$ is a ball of radius $r\in\R^+$ with center at $x \in \Rn$. $B(x,r)^c = \Rn \setminus B(x,r)$ and $B_R\coloneqq B(0,R)$.\vspace{1mm}

\item $\|\cdot\|_{L^p(U)}$ denotes the usual $L^p\(U\)$-norm. We define the weighted Lebesgue space with respect to a positive weight $w\in L^1_{\rm loc}\(\Rn\)$ as follows:
\be\no
L^p\(\Rn,w\) \coloneqq \left\{f\colon\Rn \to\R \,\colon\, f \text{ is measurable and}\int_{\Rn}|f|^{p}w \,{\rm d}x <\infty\right\}.
\ee The weighted norm $\|\cdot\|_{L^p\(\Rn,w\)}$ is given by $\|w^{\frac1p}\cdot\|_p=\left(\int_{\Rn} |\cdot|^p w\,{\rm d}x\right)^{\frac{1}{p}}$.\vspace{1mm}

\item $P\lesssim_{a}Q$ represents that there exists a constant $C\equiv C(a)>0$ such that $P\leq CQ$ holds. $P\approx_{a}Q$ means there exists $C(a),\,C'(a)>0$ such that $C'Q\leq P\leq CQ$. \vspace{1mm}

\item $\left(\X\right)'$ denotes the topological dual of the fractional homogeneous Sobolev space $\X (\Omega)$. We endow 
$\left(\X\right)'$ with the dual operator norm induced by $\X$.\vspace{1mm}

\item $C(a,b,\cdots)$ signifies a positive constant depending only on the parameters $a,b,\cdots$. When there is no confusion, other generic constants are used as $\tilde{C},\, C,\, C',\dots$ etc., and they may vary from line to line.\vspace{1mm}

\item $U[z,\la]$ denotes a general Aubin-Talenti bubble with translation parameter $z\in \Rn$ and dilation parameter $\la>0$. \vspace{1mm}

\item We regard $H_0^1(\bar{\Omega})$ as a subspace of $H^1(\mathbb{R}^N)$ via extension by zero outside $\bar{\Omega}$. 
\end{enumerate}

    \section{Preliminaries and Auxiliary lemmas}\label{sec:notations}
\noi    
We consider the variational framework associated with a combination of local and nonlocal operators. For a comprehensive treatment of such settings, we refer to \cite{biagi}, and the references therein.

\noindent
In this context, the following Sobolev-type inequality involving both local and fractional operators holds for functions $ u:\mathbb{R}^N \to \mathbb{R} $ that vanish outside an open set $\Omega \subset \mathbb{R}^N$:
\begin{equation}\label{eq:mixed_inequality}
\mathcal{S}_{N,s}(\Omega)\|u\|_{L^{2^*}(\mathbb{R}^N)}^2 \leq \|\nabla u\|_{L^2(\mathbb{R}^N)}^2 + \iint_{\mathbb{R}^{2N}}\frac{|u(x)-u(y)|^2}{|x-y|^{N+2s}}\,{\rm d}x\,{\rm d}y,    
\end{equation}
where $ N \geq 3 $, $ 2^*\coloneq \frac{2N}{N-2} $ is the critical Sobolev exponent, and the constant $ \mathcal{S}_{N,s}(\Omega) $ denotes the optimal (i.e., best possible) constant for which the inequality holds.
\noi
To this inequality, one can naturally associate the following Rayleigh-type quotient, 

\begin{equation*} 
   S(u; \Omega) := \frac{\|\nabla u\|_{L^2(\Omega)}^2 + [u]_s^2}{\|u\|_{L^{2^*}(\Omega)}^2}, \quad u \in \mathcal{C}_0^\infty(\Omega),
\end{equation*}
where $[u]_s$ stands for the Gagliardo seminorm, defined by
\begin{equation*}
[u]_s^2 := \iint_{\mathbb{R}^{2N}} \frac{|u(x)-u(y)|^2}{|x-y|^{N+2s}}\,{\rm d}x\,{\rm d}y.
\end{equation*}
Moreover, we will denote the inner product associated with the above seminorm by $\langle\cdot, \cdot\rangle_s$.

\subsection{Classical Sobolev inequality} \label{rem:propSnrecall}
Before delving further, let us outline some fundamental facts regarding the sharp constant $\So $ appearing in the classical Sobolev inequality. This constant is defined by
$$
\mathcal{S}_N := 
\inf\left\{
\frac{\|\nabla u\|^2_{L^2(\Omega)}}{\|u\|^2_{L^{2^*}(\Omega)}} : u \in C_0^\infty(\Omega ) \setminus \{ 0 \}
\right\},
$$
and is extensively discussed in the works \cite{aubin_sobolev_constant, talenti_sobolev_constant}. A notable feature of $\mathcal{S}_N$ is its independence from the domain $\Omega \subset \mathbb{R}^N$; its value depends only on the space dimension $N$. Moreover, an explicit expression for this constant is available and given by
\begin{equation*} 
\mathcal{S}_N = N(N-2)\pi \left( \frac{\Gamma(N/2)}{\Gamma(N)} \right)^{2/N} = \frac{N(N-2)}{4}\omega_N^{\frac{2}{N}}, 
\end{equation*}
where $ \Gamma(\cdot) $ denotes the Gamma function and $\omega_N\coloneq \frac{2\pi^{\frac{N+1}{2}}}{\Gamma(\frac{N+1}{2})}$ denotes the surface measure of $\mathbb{S}^N$ in $\R^{N+1}$.

It is also worth noting that when $ \Omega $ is a bounded domain, the infimum defining $ \mathcal{S}_N $ is never attained, i.e., no minimizer exists. However, in the particular case where $ \Omega = \mathbb{R}^N $, the situation is different: although the minimum is not achieved by any function in $ C_0^\infty(\mathbb{R}^N) $, it is attained in the larger space $ \mathcal{D}_0^{1,2}(\mathbb{R}^N) $. The family of extremal functions realizing the minimum in this setting is explicitly known and given by the so-called Aubin-Talenti bubbles \cite{aubin_sobolev_constant, talenti_sobolev_constant}. That is the set
\begin{equation}\label{Fam-Aub-Tal-Bub}
\mathcal{F} := \left\{
cU[z_0,\lambda](x) = c\lambda^{-\frac{(N-2)}{2}} U\left( \frac{x - z_0}{\lambda} \right) : \,c\in\R\setminus \{0\},\,\lambda > 0,\, z_0 \in \mathbb{R}^N\right\},
\end{equation}
where, 
\begin{equation*}
  U(z) := (1 + |z|^2)^{-\frac{(N-2)}{2}}.  
\end{equation*}

This family reflects the invariance properties of the problem under dilation and translation, and plays a central role in the analysis of Sobolev embeddings.

\medskip

\subsection{Function Space}\label{Function Space}

Let $\Omega \subseteq \mathbb{R}^N$ be a nonempty open subset, where $N \geq 3$, and not necessarily bounded. We define the function space $\X(\Omega)$ as the completion of $C_0^\infty(\Omega)$ with respect to the global norm
\begin{equation}\label{norm}
\rho(u) := \left( \|\nabla u\|_{L^2(\mathbb{R}^N)}^2 + [u]_s^2 \right)^{1/2}, \quad u \in C_0^\infty(\Omega),
\end{equation}
where $[u]_s$ denotes the fractional semi-norm. This norm is naturally associated with the scalar product
\begin{equation}\label{Inn-Prod}
    \langle u, v \rangle_\rho := \int_{\mathbb{R}^N} \nabla u \cdot \nabla v \, \,{\rm d}x + \langle u, v \rangle_s,
\end{equation}

where $\cdot$ represents the standard inner product in $\mathbb{R}^N$, and $\langle u, v \rangle_s$ corresponds to the bilinear form associated with the fractional part. Equipped with this inner product, the space $\mathcal{X}_0^{1,2}(\Omega)$ becomes a Hilbert space.
\medskip

In the case when $\Omega$ is bounded, it is useful to recall a fundamental inequality that expresses the continuous embedding of $H^1(\mathbb{R}^N)$ into $H^s(\mathbb{R}^N)$. Specifically, for every $u \in C_0^\infty(\Omega)$, there exists a constant $c =c(s) > 0$ such that
\begin{equation}
[u]_s^2 \leq C(s) \|u\|_{H^1(\mathbb{R}^N)}^2 = C(s)\left( \|u\|_{L^2(\mathbb{R}^N)}^2 + \|\nabla u\|_{L^2(\mathbb{R}^N)}^2 \right). \label{ContEmb}
\end{equation}

The fractional Sobolev space $H^s(\Omega)$, which lies between $L^2(\Omega)$ and $H^1(\Omega)$, is defined by
$$
H^s(\Omega) := \left\{ u \in L^2(\Omega) : \frac{u(x) - u(y)}{|x - y|^{\frac{N}{2} + s}} \in L^2(\Omega \times \Omega) \right\}.
$$
This space is equipped with the norm
$$
\|u\|_{H^s} := \left( \int_\Omega |u(x)|^2\, \,{\rm d}x + \int_\Omega \int_\Omega \frac{|u(x) - u(y)|^2}{|x - y|^{N + 2s}}\, \,{\rm d}x\, \,{\rm d}y \right)^{1/2}.
$$

The inequality \eqref{ContEmb}, together with the classical Poincaré inequality, ensures that the norm $\rho(\cdot)$ is equivalent to the standard $H^1(\mathbb{R}^N)$-norm on $C_0^\infty(\Omega)$. As a consequence, the space $\mathcal{X}_0^{1,2}(\Omega)$ coincides with the closure of $C_0^\infty(\Omega)$ in $H^1(\mathbb{R}^N)$, that is,
$$
\begin{aligned}
\mathcal{X}_0^{1,2}(\Omega) &= \overline{C_0^\infty(\Omega)}^{\|\cdot\|_{H^1(\mathbb{R}^N)}} \\
&= \left\{ u \in H^1(\mathbb{R}^N) : u|_\Omega \in H_0^1(\Omega) \text{ and } u \equiv 0 \text{ a.e. in } \mathbb{R}^N \setminus \Omega \right\}.
\end{aligned}
$$

We now conclude this section with a compactness result that will be crucial in the proof of the global compactness theorem, Theorem \ref{PS_decomposition}. To this end, we first recall a useful auxiliary lemma from \cite{giovanni}.

\begin{lemma}\label{LeoniIneq}
Let $\Omega \subseteq \Rn$ be an open set, $0<s_1<s_2<1$. Then $\forall u \in H^{s_2}(\Omega)$, there exists a constant $c>0$ such that 
\begin{equation*}
    \[u\]_{s_1}\leq c \|u\|_{L^2(\Omega)}^{1-s_1/s_2}\[u\]_{s_2}^{s_1/s_2}.
\end{equation*}
\end{lemma}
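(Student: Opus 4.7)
This is a standard interpolation inequality between the two Gagliardo seminorms and the $L^2$ norm, and the natural strategy is the splitting of the double integral defining $[u]_{s_1}^2$ according to whether $|x-y|$ is small or large compared to a parameter $R>0$ to be optimized at the end. The plan is to work directly with
$$
[u]_{s_1}^2=\iint_{\Omega\times\Omega}\frac{|u(x)-u(y)|^2}{|x-y|^{N+2s_1}}\,{\rm d}x\,{\rm d}y,
$$
and write $[u]_{s_1}^2=I_1(R)+I_2(R)$, where $I_1$ integrates over the diagonal region $\{|x-y|<R\}$ and $I_2$ over its complement.

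For the near-diagonal piece I would simply multiply and divide by $|x-y|^{N+2s_2}$, obtaining
$$
I_1(R)=\iint_{\{|x-y|<R\}}\frac{|u(x)-u(y)|^2}{|x-y|^{N+2s_2}}|x-y|^{2(s_2-s_1)}\,{\rm d}x\,{\rm d}y\leq R^{2(s_2-s_1)}\,[u]_{s_2}^2,
$$
since $s_2>s_1$ makes the factor $|x-y|^{2(s_2-s_1)}$ bounded by $R^{2(s_2-s_1)}$. For the far-diagonal piece the differences no longer help, so I would estimate crudely via $|u(x)-u(y)|^2\leq 2(|u(x)|^2+|u(y)|^2)$, use symmetry in $x,y$, and then compute the radial tail
$$
\int_{|x-y|\geq R}\frac{{\rm d}y}{|x-y|^{N+2s_1}}=\frac{\omega_{N-1}}{2s_1}\,R^{-2s_1},
$$
giving $I_2(R)\leq C(N,s_1)\,R^{-2s_1}\,\|u\|_{L^2(\Omega)}^2$.

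Combining these,
$$
[u]_{s_1}^2\leq R^{2(s_2-s_1)}[u]_{s_2}^2 + C R^{-2s_1}\|u\|_{L^2(\Omega)}^2.
$$
I would then minimize the right-hand side in $R$. Differentiating and setting equal to zero yields $R^{2s_2}\sim \|u\|_{L^2(\Omega)}^2/[u]_{s_2}^2$ (assuming $[u]_{s_2}\neq 0$; otherwise $u$ is constant and, being $L^2$ on $\Omega$, necessarily zero when $\Omega$ is unbounded, while for bounded $\Omega$ the inequality is trivial after adjusting the constant). Substituting this optimal $R$ back balances the two terms and produces
$$
[u]_{s_1}^2\leq c\,\|u\|_{L^2(\Omega)}^{2(1-s_1/s_2)}\,[u]_{s_2}^{2s_1/s_2},
$$
from which the claim follows upon taking square roots. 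The only mild subtlety is the degenerate case $[u]_{s_2}=0$, handled as noted above, and making sure the constant $c$ depends only on $N,s_1,s_2$; no step is genuinely difficult, the argument being essentially the classical dyadic/truncation interpolation between Sobolev-Slobodeckij seminorms.
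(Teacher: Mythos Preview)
Your argument is correct and is precisely the standard splitting-and-optimization proof of this interpolation inequality. The paper itself does not give a proof of this lemma: it is merely quoted from \cite{giovanni} (it is Corollary~6.19 there), so there is nothing to compare beyond noting that your near/far-diagonal decomposition with optimization in $R$ is exactly the textbook argument that reference would contain. One very minor remark: in the degenerate case $[u]_{s_2}=0$ your discussion can be streamlined, since $[u]_{s_2}=0$ forces $u$ to be a.e.\ constant on $\Omega$ (the seminorm is over all of $\Omega\times\Omega$), hence $[u]_{s_1}=0$ as well and the inequality is $0\le 0$ regardless of whether $\Omega$ is bounded; no adjustment of constants is needed.
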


\begin{lemma}\label{compact_embedding}
Let $\Omega \subset \mathbb{R}^N$ be a bounded and nonempty open set. Then, the embedding $\mathcal{X}_0^{1,2}(\Omega) \hookrightarrow H^s(\Omega)$ is compact.
\end{lemma}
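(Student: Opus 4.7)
The plan is to combine the continuous inclusion $\mathcal{X}_0^{1,2}(\Omega) \subset H^1_0(\Omega)$ with the classical Rellich--Kondrachov theorem and then bootstrap the resulting $L^2(\Omega)$ convergence into $H^s(\Omega)$ convergence by means of the interpolation inequality stated in Lemma~\ref{LeoniIneq}. A crucial point is that one must interpolate between $L^2$ and an intermediate fractional regularity $H^{s_2}$ with $s < s_2 < 1$, rather than with $H^1$ directly, because Lemma~\ref{LeoniIneq} requires $s_2 < 1$. Fortunately, the continuous embedding~\eqref{ContEmb} provides exactly the uniform bound on $[\cdot]_{s_2}$ that is needed.

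Let $\{u_n\} \subset \mathcal{X}_0^{1,2}(\Omega)$ be a bounded sequence, say $\rho(u_n) \leq M$. Since $u_n \equiv 0$ a.e.\ in $\mathbb{R}^N \setminus \Omega$ and since $\rho$ is equivalent to the $H^1(\mathbb{R}^N)$ norm on $\mathcal{X}_0^{1,2}(\Omega)$, the restrictions $u_n|_\Omega$ are uniformly bounded in $H^1_0(\Omega)$. By Rellich--Kondrachov, a subsequence (still denoted $\{u_n\}$) converges strongly in $L^2(\Omega)$ to some $u \in L^2(\Omega)$. I will show that in fact $u_n \to u$ in $H^s(\Omega)$.

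Fix any $s_2 \in (s,1)$. Applying Lemma~\ref{LeoniIneq} with $s_1 = s$ to the difference $u_n - u_m$ on $\Omega$ yields
\begin{equation*}
[u_n - u_m]_{s, \Omega} \leq c\, \|u_n - u_m\|_{L^2(\Omega)}^{1 - s/s_2}\, [u_n - u_m]_{s_2, \Omega}^{\,s/s_2}.
\end{equation*}
Because $u_n - u_m$ vanishes outside $\Omega$, the Gagliardo seminorm over $\Omega \times \Omega$ is dominated by the one over $\mathbb{R}^{2N}$, and~\eqref{ContEmb} gives the uniform bound
\begin{equation*}
[u_n - u_m]_{s_2, \Omega} \leq [u_n - u_m]_{s_2, \mathbb{R}^{2N}} \leq c(s_2)\,\|u_n - u_m\|_{H^1(\mathbb{R}^N)} \leq 2\,c(s_2)\,M.
\end{equation*}
Since $\{u_n\}$ is Cauchy in $L^2(\Omega)$, the first factor on the right-hand side of the interpolation inequality tends to zero, and hence $\{u_n\}$ is Cauchy in the seminorm $[\,\cdot\,]_{s,\Omega}$. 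Combined with the $L^2(\Omega)$ convergence, this shows $u_n \to u$ in $H^s(\Omega)$, proving compactness.

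I do not expect any serious obstacle here. The only subtleties are (i) choosing $s_2 < 1$ so that Lemma~\ref{LeoniIneq} applies; (ii) recognizing that the seminorm on $\Omega \times \Omega$ is controlled by the one on $\mathbb{R}^{2N}$ precisely because functions in $\mathcal{X}_0^{1,2}(\Omega)$ vanish outside $\Omega$; and (iii) invoking~\eqref{ContEmb} to convert the uniform $H^1(\mathbb{R}^N)$ bound into a uniform $H^{s_2}$ bound. These three ingredients combine cleanly to reduce compactness in $H^s(\Omega)$ to compactness in $L^2(\Omega)$.
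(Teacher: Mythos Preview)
Your proposal is correct and follows essentially the same route as the paper: extract an $L^2(\Omega)$-convergent subsequence via Rellich--Kondrachov, then upgrade to $H^s(\Omega)$ convergence by applying the interpolation inequality of Lemma~\ref{LeoniIneq} with an intermediate exponent $s_2\in(s,1)$ and a uniform $H^{s_2}$ bound coming from~\eqref{ContEmb}. The paper's proof is slightly terser (it bounds $[u_k-u_m]_{s'}$ via the triangle inequality and boundedness of each term rather than explicitly comparing the $\Omega\times\Omega$ and $\mathbb{R}^{2N}$ seminorms), but the argument is the same.
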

\begin{proof}
Let us consider a bounded sequence $\{u_k\}$ in the space $\X(\Omega)$. Since this sequence is bounded in $H^1(\Omega)$, it has a convergent subsequence (still denoted by $\{u_k\}$) in $L^2(\Omega)$. Consequently, $\{u_k\}$ forms a Cauchy sequence in $L^2(\Omega)$. 

    Furthermore, using the continuity of the embedding $H^1(\Omega) \hookrightarrow H^{s^\prime}(\Omega)$ (as established in \eqref{ContEmb} for some $s^\prime \in (s,1)$) and employing Lemma \ref{LeoniIneq}, we can show that $\{u_k\}$ is a Cauchy sequence in $H^s(\Omega)$. Indeed
    \begin{align*}
       [u_k-u_{m}]_s &\leq \|u_k-u_{m}\|_{L^2{(\Omega)}}^{1-s/s^{\prime}}[u_k-u_{m}]_{s^{\prime}}^{s/s^{\prime}}\\
       &\leq \|u_k-u_{m}\|_{L^2{(\Omega)}}^{1-s/s^{\prime}} \left([u_k]_{s^\prime}+[u_m]_{s^\prime}\right)^{s/s^{\prime}}\leq C  \|u_k-u_{m}\|_{L^2{(\Omega)}}^{1-s/s^{\prime}}.
    \end{align*}
The inequality above clearly implies that $\{u_k\}$ is a Cauchy sequence in $H^s(\Omega)$. Thus, the proof is complete.
\end{proof}
\medskip
\noindent
\noi
We next define the energy functional corresponding to the problem \eqref{Maineq}. For any $ u \in \X(\Omega)$, it is given by  
\begin{eqnarray}
I_{\lambda,s}(u) &:=& \frac{1}{2}\int_{\Rn} |\nabla u|^2\, \,{\rm d}x + \frac{1}{2} \iint_{\mathbb{R}^{2N}} \frac{|u(x)-u(y)|^2}{|x-y|^{N+2s}}\, \,{\rm d}x\, \,{\rm d}y - \frac{\lambda}{2} \int_{\Rn}|u|^2\, \,{\rm d}x  \no\\
& & - \frac{1}{2^*}\int_{\Rn} |u|^{2^*}\, \,{\rm d}x \\
&=& \frac{1}{2}\rho (u)^2 - \frac{\lambda}{2} \int_{\Omega} |u|^2\, \,{\rm d}x - \frac{1}{2^*}\int_{\Omega} |u|^{2^*}\, \,{\rm d}x.\label{Ener-Fun}
\end{eqnarray}
Moreover, the limiting functional associated with the problem at infinity becomes
\begin{equation*}
I_{\infty}(u) := \frac{1}{2}\int_{\Rn} |\nabla u|^2 \,{\rm d}x - \frac{1}{2^*}\int_{\Rn}|u|^{2^*}\,{\rm d}x.   
\end{equation*}

\section{Global Compactness Result}
 In this section, we prove the global compactness result for the problem \eqref{Maineq}, stated in Theorem \ref{PS_decomposition}.

\begin{proof}[Proof of Theorem \ref{PS_decomposition}] We divide the proof into several steps. \medskip

\noi
\textbf{\underline{Step~1}:} We first show that the sequence $\{u_k\}$ is bounded in $\X(\om)$. Consequently, up to a subsequence (still denoted by $\{u_k\}$), we may assume that $u_k \rightharpoonup u_0$ weakly in $\X(\om)$. Moreover, $u_0$ is a weak solution to the problem \eqref{Maineq}.

\medskip

\noi Let $\{u_k\}$ be a Palais–Smale sequence for $I_{\la,s}$ at level $\beta$, i.e., $I_{\la,s}(u_k) \to \beta$ and $I_{\la,s}'(u_k) \to 0$ in $(\X(\om))'$ as $k \to \infty$. Then, for some constant $C > 0$, we have
\begin{align*}
\beta +o(1)+o(1)\rho(u_k)
    \geq & \left|I_{\la,s}(u_k) - \frac{1}{2}\prescript{}{\DX}{\langle} I'_{\la,s}(u_k),\, u_k{\rangle}_{\X}\right| \\
    = & \left( \frac{1}{2} - \frac{1}{2^*} \right) \int_{\om}|u_k|^{2^*} \,{\rm d}x\\
    \geq & ~C \left( \int_{\om} |u_k|^2 \,{\rm d}x \right)^{\frac{2^*}{2}},
\end{align*}
for some constant $C > 0$. As a result, we obtain
\begin{align*}
    \rho(u_k)^2 &= 2 I_{\la,s}(u_k) + \la \int_{\om} |u_k|^2 \,{\rm d}x + \frac{2}{2^*} \int_{\om} |u_k|^{2^*} \,{\rm d}x \\
    &\leq 2\left(\beta+o(1)\right) + C\left(\beta+o(1)+o(1)\rho(u_k)\right)^{\tfrac{2}{2^*}} + C\left(\beta+o(1)+\rho(u_k)\right)\\
    &\leq C+o(1)\rho (u_k),
\end{align*}
for some constants $C>0$. This inequality shows that the sequence $\{u_k\}$ is bounded in $\X(\om)$. \\
Therefore, there exists $u_0 \in \X(\om)$ such that, up to a subsequence, $u_k \rightharpoonup u_0$ weakly in $\X(\om)$. This implies that for all $\phi \in \X(\om)$, 
\begin{align*}
\langle u_k,\phi\rangle_{\rho}:=&\int_{\om}\nabla u_k\cdot\nabla \phi\,{\rm d}x + \iint_{\R^{2N}}\frac{\left(u_k(x)-u_k(y)\right)\left(\phi (x)-\phi (y)\right)}{\abs{x-y}^{N+2s}}\,{\rm d}x\,{\rm d}y\no\\
&\rightarrow \langle u_0,\phi\rangle_{\rho}=\int_{\om}\nabla u_0\cdot\nabla \phi\,{\rm d}x + \iint_{\R^{2N}}\frac{(u_0(x)-u_0(y))(\phi (x)-\phi (y))}{\abs{x-y}^{N+2s}}\,{\rm d}x\,{\rm d}y,
\end{align*}
as $k \to \infty$.
\noi
Moreover, since $u_k \to u_0$ almost everywhere in $\om$, and using Vitali's convergence theorem, we obtain
$$
\int_{\om} \left( |u_k|^{2^* - 2} u_k - |u_0|^{2^* - 2} u_0 \right) \phi \, \,{\rm d}x \to 0, \quad \text{as } k \to \infty, \quad \text{for all } \phi \in \X(\om).
$$
\noi
In addition, since $\X(\om)$ is compactly embedded into $L^2(\om)$, it follows that
$$
\int_{\om} u_k \phi \, \,{\rm d}x \to \int_{\om} u_0 \phi \, \,{\rm d}x, \quad \text{as } k \to \infty, \quad \text{for all } \phi \in \X(\om).
$$
\noi
Putting these together, we conclude that $\prescript{}{\DX}{\langle} I_{\la,s}^{\prime}(u_0), \phi \rangle_{\X} = 0$ for all $\phi \in \X(\om)$. Hence, $u_0$ is a weak solution to the problem \eqref{Maineq}.

\bigskip
\noindent
   \textbf{\underline{Step~2}:} Define $v_k \coloneqq u_k - u_0$. We aim to show that the sequence $\{v_k\}$ forms a $(PS)$-sequence for the limiting functional $I_{\infty}$ at the level $\beta - I_{\la,s}(u_0)$.

\smallskip
 \noi
Since the embedding $\X(\om) \hookrightarrow H_0^1(\om)$ is continuous and $u_k \rightharpoonup u_0$ in $\X(\om)$, it follows that $v_k \rightharpoonup 0$ in $H_0^1(\om)$. This yields the identity
\begin{equation}
    \int_{\mathbb{R}^N} |\nabla v_k|^2 
    = \int_{\Omega} |\nabla v_k|^2 
    = \int_{\Omega} |\nabla u_k|^2 - \int_{\Omega} |\nabla u_0|^2 + o(1), 
    \label{BrezisLiebGrad}
\end{equation}
where $o(1)\rightarrow 0$ as $k \to \infty$.

\smallskip
\noi
Moreover, due to the compact embedding of $H_0^1(\Omega)$ into $L^2(\Omega)$ and by applying the Brezis–Lieb Lemma, we obtain
\begin{equation}
    o(1)=\int_{\Omega} |v_k|^2 = \int_{\Omega} |u_k|^2 - \int_{\Omega} |u_0|^2 + o(1).
\end{equation}

\smallskip
\noi
Similarly, the Brezis–Lieb Lemma applied to the critical term gives
\begin{equation}
    \int_{\Omega} |v_k|^{2^*} = \int_{\Omega} |u_k|^{2^*} - \int_{\Omega} |u_0|^{2^*} + o(1).
\end{equation}

\noi
\smallskip
In addition, employing Lemma \ref{compact_embedding} along with the Brezis–Lieb Lemma for the nonlocal term, we get
\begin{equation}
    o(1)= [v_k]_s^{2} = [u_k]_s^{2} - [u_0]_s^{2} + o(1).
    \label{BrezisLiebGagliardo}
\end{equation}

\medskip
\noindent
We now claim that $\{v_k\}$ is indeed a $(PS)$-sequence for $I_\infty$ at the level $\beta - I_{\la,s}(u_0)$. From \eqref{BrezisLiebGrad}-\eqref{BrezisLiebGagliardo}, we compute:
\begin{align*}
&I_{\infty}(v_k)\\
& =\frac{1}{2} \int_{\om} \abs{\nabla v_k}^2 -\frac{1}{2^*}\int_{\om}\abs{v_k}^{2^*}\,\no\\
& =\frac{1}{2}\int_{\om} \abs{\nabla u_k}^2\,  + \frac{1}{2}\iint_{\R^{2N}}\frac{\abs{u_k(x)-u_k(y)}^2}{|x-y|^{N+2s}}\,{\rm d}x\,{\rm d}y - \frac{\la}{2}\int_{\Omega} |u_k|^{2}\, -\frac{1}{2^*}\int_{\om}\abs{u_k}^{2^*}\, \no\\
&-\left\{\frac{1}{2}\int_{\om}\abs{\nabla u_0}^2\,  + \frac{1}{2}\iint_{\R^{2N}}\frac{\abs{u_0(x)-u_0(y)}^2}{|x-y|^{N+2s}}\,{\rm d}x\,{\rm d}y - \frac{\la}{2}\int_{\Omega} |u_0|^{2}\,  -\frac{1}{2^*}\int_{\om}\abs{u_0}^{2^*}\, \right\}+ o(1)\no\\
& =I_{\la,s}(u_k) - I_{\la,s}(u_0) +o(1)\rightarrow \beta - I_{\la,s}(u_0)\,\text{ as }k\to\infty.\no
\end{align*}

\medskip
\noindent
To verify the $(PS)$ condition, we consider
\begin{align*}
     &\prescript{}{\DX}{\langle} I'_{\infty}(u_k-u_0),\, \phi{\rangle}_{\X}\no\\
     &= \int_{\om} \nabla (u_k-u_0)\cdot\nabla \phi\,  - \int_{\om} \abs{u_k-u_0}^{2^*-2}(u_k-u_0)\phi\, \no\\
     &= \int_{\om}\nabla u_k\cdot \nabla \phi\, +\iint_{\R^{2N}}\frac{(u_k(x)-u_k(y))(\phi(x)-\phi(y))}{\abs{x-y}^{N+2s}}\,{\rm d}x\,{\rm d}y\no\\
     &-\la\int_{\om}u_k\phi\, -\int_{\om}\abs{u_k}^{2^*-2}u_k\phi\,  \no\\
     &-\left\{ \int_{\om}\nabla u_0\cdot \nabla \phi\, +\iint_{\R^{2N}}\frac{(u_0(x)-u_0(y))(\phi(x)-\phi(y))}{\abs{x-y}^{N+2s}}\,{\rm d}x\,{\rm d}y\right\}\\
     &+\la\int_{\om}u_0\phi\, +\int_{\om}\abs{u_0}^{2^*-2}u_0\phi\,  \no\\
     &+\la\int_{\om}(u_k-u_0)\phi\, -\int_{\om}\left\{\abs{u_k-u_0}^{2^*-2}(u_k-u_0)-\abs{u_k}^{2^*-2}u_k+\abs{u_0}^{2^*-2}u_0\right\}\phi\,  + o(1)\no\\
     &= \prescript{}{\DX}{\langle} I'_{\la,s}(u_k),\, \phi{\rangle}_{\X}- \prescript{}{\DX}{\langle} I'_{\la,s}(u_0),\, \phi{\rangle}_{\X}+o(1)=o(1),\no
\end{align*}
for all $\phi \in \X(\Omega)$, since $u_k$ is a P.S. sequence and $u_0$ is a weak solution. In fact we can say more. Using Lemma~\ref{compact_embedding}, we can easily see that $\{v_k\}$ is indeed a $(PS)$ sequence for $I_{\infty}$ at the level $\beta-I_{\la,s}$ on $H^1_0(\Omega)$.

\bigskip
\noindent
    \textbf{\underline{Step~3}:} Since $v_k=u_k - u_0 \rightharpoonup 0$ in $\X(\Omega)$, and consequently in $H_0^1(\Omega)$, applying Lemma~3.3 from \cite{Struwe-VM}, we deduce the following:
there exists a sequence of points $\{x_k\} \subseteq \Omega$, a sequence of radii $\{R_k\}$ with $R_k \to \infty$ as $k \to \infty$, a non-trivial solution $v^0$ to the limiting problem \eqref{eq:problem_at_infinity}, and a P.S. sequence $\{w_k\}$ for $I_\infty$ in $H_0^1(\Omega)$ such that for a subsequence $\{v_k\}$, the following holds:
$$
w_{k}=v_{k}-R_{k}^{\frac{N-2}{2}} v^{0}\left(R_{k}\left(\cdot-x_{k}\right)\right)+o(1),
$$
where $o(1) \to 0$ in $\mathcal{D}^{1,2}\left(\mathbb{R}^N\right)$ as $k \to \infty$. Notably, $w_k \rightharpoonup 0$ weakly. Additionally, the energy satisfies
\be\no
I_{\infty}\left(w_{k}\right)=I_{\infty}\left(v_{k}\right)-I_{\infty}\left(v^{0}\right)+o(1) .
\ee

\medskip
\noindent
    \textbf{\underline{Step~4}:} From step $2$, we have
$$
I_{\lambda,s}\left(u_{k}\right)=I_{\lambda,s}\left(u_0\right)+I_{\infty}\left(v_{k}\right)+o(1).
$$
 \noi   
We proceed to conclude the proof in this step by iterating the process outlined in Step 3 as follows: Step 3 is applied to the sequences defined by $v_k^1 = u_k - u_0$, and for $j > 1, v_{k}^{j}=u_{k}-u_{0}-\sum_{i=1}^{j-1} U_{k}^{i}=v_{k}^{j-1}-U_{k}^{j-1}$, where

\be\no
U_{k}^{i}(x)=\left(R_{k}^{i}\right)^{\frac{N-2}{2}} U_{i}\left(R_{k}^{i}\left(x-x_{k}^{i}\right)\right),
\ee

\smallskip
\noi
where $U_i$ satisfies the equation 
\eqref{eq:problem_at_infinity}. Now using induction, we establish:
$$
\begin{aligned}
I_{\infty}\left(v_{k}^{j}\right) & =I_{\la,s}\left(u_{k}\right)-I_{\la,s}\left(u_{0}\right)-\sum_{i=1}^{j-1} I_{\infty}\left(U_{i}\right) \\
& \leq I_{\la,s}\left(u_{k}\right)-(j-1) \beta^{*},
\end{aligned}
$$
\noi
where $\beta^* = \frac{1}{N}\mathcal{S}_N^{N/2}$.

\noi
Since the last term becomes negative for sufficiently large $j$, the induction terminates after some index $l \geq 0$. Moreover, for this terminal index, we have

\be\no
v_{k}^{l+1}=u_{k}-u_{0}-\sum_{j=1}^{l} U_{k}^{j} \rightarrow 0
\ee
\noi
strongly in $\mathcal{D}^{1,2}\left(\mathbb{R}^N\right)$. Moreover,

\be\no
I_{\la,s}\left(u_{k}\right)-I_{\la,s}\left(u_{0}\right)-\sum_{j=1}^{l} I_{\infty}\left(U_{j}\right) \rightarrow 0.
\ee

\noi
Finally, the interaction of the bubbles follows directly from \cite{bahri_coron_topology}.
    
\end{proof}

\medskip

\medskip
\section{Positive solutions on annular-shaped domains: an application of profile decomposition}

In this section, we construct a high-energy positive solution to the critical exponent problem involving the mixed local–nonlocal operator on $\Omega$, with $\lambda = 0$ in equation~\eqref{Maineq}, leveraging the topological properties of $\Omega$. The argument follows in the spirit of Coron \cite{coron_topology} (see also \cite[Theorem~3.4]{Struwe-VM}).

To establish Theorem~\ref{thm:high_energy_soln}, we establish some preliminary lemmas and notational conventions.

\medskip

Our strategy involves constructing a two-parameter family of Aubin–Talenti–type bubbles whose energy lies strictly between the first and second energy levels. Assume that the domain $\Omega$ satisfies conditions~\eqref{eq:domain_property_1} and~\eqref{eq:domain_property_2} of Theorem~\ref{thm:high_energy_soln}. For simplicity of exposition, we assume $R_{1}=(4 R)^{-1}<1<4 R=R_{2}$ and $R\gg 1$. 
\noi
For $\sigma \in \mathbb{S}^{N-1} :=\left\{x \in \mathbb{R}^{N} ;|x|=1\right\}, x \in \mathbb{R}^{N}$ and $ t \in [0,1)$, consider the following $2$-parameter family of functions
$$
u_{t}^{\sigma}(x)\coloneqq U[t\sigma, (1-t)](x) =\left[\frac{1-t}{(1-t)^{2}+|x-t \sigma|^{2}}\right]^{\frac{N-2}{2}} \in \mathcal{D}^{1,2} \left(\mathbb{R}^{N}\right).
$$
\noi These functions are motivated from the fact that $\So$ is attained at each of $u_{t}^{\sigma}$ and $\So = \mathcal{S}_{N,s}$ as discussed in Section~\ref{sec:notations}.
Now, for any $\sigma \in \mathbb{S}^{N-1}$, we have
\begin{equation}
u_{t}^{\sigma} \rightarrow U[0,1]=u_{0}=\left[\frac{1}{1+|x|^{2}}\right]^{\frac{N-2}{2}} \quad \text{as } t \searrow 0. \label{eq:AubinTalentiBubbles}
\end{equation}
\noi Moreover, $u_{t}^{\sigma}$ \textit{concentrates} at $\sigma$ as $t \nearrow 1$. 
To localize these functions within the domain $\Omega$, consider a radially symmetric cut-off function $\varphi \in C_0^\infty(\mathbb{R}^N)$ satisfying
$$\varphi \equiv 1  \text{ on } \left\{x : \frac{1}{2}<|x|<2\right\}, \quad \varphi \equiv 0 \text{ outside } \left\{x : \frac{1}{4}<|x|<4\right\},\quad 0 \leq \varphi \leq 1 \text{ on } \Omega.$$
\noi
For each $R \geq 1$, define the rescaled cut-off
$$
\varphi_{R}(x)= \begin{cases}\varphi(R x), & 0 \leq|x|<R^{-1} \\ 1, & R^{-1} \leq|x|<R \\ \varphi(x / R), & R \leq|x|.\end{cases}
$$

\noi
Now with the appropriate cut-off function in hand, we define 
$$
w_{t,R}^\sigma=u_{t}^{\sigma} \cdot \varphi_{R},\;\; w_{0,R}=u_{0} \cdot \varphi_{R} \in \X(\Omega).
$$
\noi

\begin{lemma}\label{Lem:seminormConvergence}
With the notations introduced above, the following convergence holds $$\left\|w_{t,R}^\sigma-u_{t}^{\sigma}\right\|_{\X(\Rn)} \rightarrow 0 \text{ as } R \to \infty,$$ uniformly with respect to $\sigma \in \mathbb{S}^{N-1}$ and $t \in [0,1)$.
\end{lemma}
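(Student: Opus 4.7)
The plan is to introduce $\psi_R\coloneqq 1-\varphi_R$ so that $w_{t,R}^\sigma - u_t^\sigma = -u_t^\sigma\psi_R$, and then estimate
\[
\rho(u_t^\sigma\psi_R)^2 \;=\; \|\nabla(u_t^\sigma\psi_R)\|_{L^2(\Rn)}^2 \;+\; [u_t^\sigma\psi_R]_s^2
\]
by handling the two summands separately, in each case showing convergence to $0$ as $R\to\infty$ uniformly in $(t,\sigma)\in[0,1)\times\mathbb{S}^{N-1}$. Writing $z\coloneqq t\sigma$ and $\lambda\coloneqq 1-t$, so that $u_t^\sigma = U[z,\lambda]$ and $|z|+\lambda = 1$, I would exploit throughout the scaling identity $u_t^\sigma(x) = \lambda^{-(N-2)/2}U((x-z)/\lambda)$.

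I would first handle the gradient part via the Leibniz rule
\[
\|\nabla(u_t^\sigma\psi_R)\|_{L^2(\Rn)}^2 \;\le\; 2\int_{\Rn}|\nabla u_t^\sigma|^2\psi_R^2\,{\rm d}x + 2\int_{\Rn}|u_t^\sigma|^2|\nabla\varphi_R|^2\,{\rm d}x.
\]
After the change of variable $x = z+\lambda y$, both integrals reduce to integrals of $|\nabla U|^2$ and $|U|^2$ against weights supported either in $\{y:|z+\lambda y|\notin[R^{-1},R]\}$ or in the preimages of the thin transition annuli $\{1/(4R)\le|x|\le 1/(2R)\}\cup\{2R\le|x|\le 4R\}$ of $\nabla\varphi_R$. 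Splitting the analysis according to $\lambda\ge 1/2$ versus $\lambda<1/2$, and using the pointwise decay bounds $|U(y)|\lesssim(1+|y|)^{-(N-2)}$, $|\nabla U(y)|\lesssim(1+|y|)^{-(N-1)}$ together with the constraint $|z|+\lambda=1$, a uniform estimate of order $R^{2-N}$ on both terms should follow, hence $\|\nabla(u_t^\sigma\psi_R)\|_{L^2(\Rn)}\to 0$ uniformly in $(t,\sigma)$.

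For the Gagliardo part I would apply Lemma~\ref{LeoniIneq} with $s_1=s$, $s_2=1$ to $u_t^\sigma\psi_R$, yielding
\[
[u_t^\sigma\psi_R]_s \;\le\; c\,\|u_t^\sigma\psi_R\|_{L^2(\Rn)}^{1-s}\|\nabla(u_t^\sigma\psi_R)\|_{L^2(\Rn)}^{s}.
\]
A direct scaling computation gives $\|u_t^\sigma\|_{L^2(\Rn)} = \lambda\,\|U\|_{L^2(\Rn)}$, which is uniformly bounded provided $\|U\|_{L^2(\Rn)}<\infty$, i.e., $N\ge 5$, precisely the regime relevant to Theorem~\ref{thm:high_energy_soln}. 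Combining the uniform bound $\|u_t^\sigma\psi_R\|_{L^2(\Rn)}\le\|U\|_{L^2(\Rn)}$ with the gradient convergence established in the preceding step gives $[u_t^\sigma\psi_R]_s\to 0$ uniformly in $(t,\sigma)$, completing the proof.

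The principal technical difficulty will be securing the uniform gradient estimate in the concentration regime $\lambda\searrow 0$, where the bubble $u_t^\sigma$ concentrates at $\sigma\in\mathbb{S}^{N-1}$ and the rescaled cutoff $\varphi_R(z+\lambda\cdot)$ depends sensitively on $(t,\sigma)$. I plan to handle this via a further case split on $\lambda\ge R^{-1/2}$ versus $\lambda<R^{-1/2}$: in the first sub-regime the rescaled support $\{y:|z+\lambda y|\notin[R^{-1},R]\}$ has uniformly shrinking Lebesgue measure, while in the second the decay of $U$ and $\nabla U$ at $|y|\sim 1/\lambda$ combined with $|z|+\lambda=1$ yields the required smallness via a direct integral estimate.
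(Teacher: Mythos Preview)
Your approach is correct and closely parallels the paper's. Both arguments control the Gagliardo seminorm by interpolation between $L^2$ and $\dot H^1$; the paper invokes \cite[Theorem~6.21]{giovanni} (the additive form $[v]_s^2\le C_1(\ell)\|v\|_{L^2}^2+C_2(\ell)\|\nabla v\|_{L^2}^2$), whereas you cite Lemma~\ref{LeoniIneq} with $s_2=1$ --- note that lemma is stated only for $0<s_1<s_2<1$, so strictly speaking you want the $H^1$ endpoint version, which is precisely the theorem the paper quotes. The remaining difference is in the division of labor: the paper dispatches the gradient term by citing Struwe's estimate verbatim and then proves $\|w_{t,R}^\sigma-u_t^\sigma\|_{L^2}\to 0$ explicitly (outer region by change of variables, inner ball by H\"older against the scale-invariant $L^{2^*}$ norm), while you merely bound the $L^2$ norm by $\lambda\|U\|_{L^2}\le\|U\|_{L^2}$ and push all the work into a hand-made case analysis for the gradient convergence. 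Both routes succeed under the same restriction $N\ge 5$; yours trades a one-line $L^2$ bound for a gradient argument that the paper sidesteps via citation.
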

\begin{proof}

Following the estimates established in~\cite[Theorem~3.4]{Struwe-VM}, we obtain
\begin{align}
\int_{\mathbb{R}^{N}}\left|\nabla\left(w_{t,R}^\sigma-u_{t}^{\sigma}\right)\right|^{2} {\rm d}x &\leq C \int_{\left(\mathbb{R}^{N} \backslash B_{2 R}\right) \cup B_{(2 R)^{-1}}}\left|\nabla u_{t}^{\sigma}\right|^{2} {\rm d}x \notag\\
&+C \cdot R^{-2} \int_{B_{4 R} \backslash B_{2 R}}\left|u_{t}^{\sigma}\right|^{2} {\rm d}x+ CR^{2} \int_{B_{(2 R)^{-1}}}\left|u_{t}^{\sigma}\right|^{2} {\rm d}x \rightarrow 0 \label{gradEstimate}
\end{align}
as $R \rightarrow \infty$, uniformly in $\sigma \in \mathbb{S}^{N-1}\;\text{and }t \in[0,1)$. 

\smallskip
    Now, we will be done once we show the convergence of $ w_{t,R}^\sigma $ to $ u_{t}^\sigma $ in the Gagliardo seminorm uniformly for $ t \in [0,1) $ and $ \sigma \in \mathbb{S}^{N-1}$, i.e., we would like to show that
$
\sup_{t ,\sigma} \left[ w_{t,R}^{\sigma} - u_t^\sigma \right]_s \to 0 \text{ as } R \to \infty.
$
Indeed, by \cite[Theorem~6.21]{giovanni}, it suffices to prove that
$
\|w_{t,R}^\sigma - u_{t}^{\sigma}\|_{L^2(\mathbb{R}^N)} \to 0.
$

\noi
We compute
\begin{align*}
    \int_{\Rn}|w_{t,R}^\sigma-u_{t}^{\sigma}|^2 {\rm d}x &= \int_{\Rn}(1-\varphi_R(x))^2|u_{t}^{\sigma}|^2 \,{\rm d}x\\
    &\leq \int_{\left(\mathbb{R}^{N} \backslash B_{2 R}\right) \cup B_{(2 R)^{-1}}}|u_{t}^{\sigma}|^2 {\rm d}x=\underbrace{\int_{\left(\mathbb{R}^{N} \backslash B_{2 R}\right)}|u_{t}^{\sigma}|^2\, {\rm d}x}_{\mathcal{I}_1}+\underbrace{\int_{ B_{(2 R)^{-1}}}|u_{t}^{\sigma}|^2 {\rm d}x}_{\mathcal{I}_2}.  
\end{align*}
\noi  
We estimate each term separately.\\
\noindent
\textbf{Estimate for $ \mathcal{I}_1 $:}
\begin{align*}
    \mathcal{I}_1:= \int_{\left(\mathbb{R}^{N} \backslash B_{2 R}\right)}|u_{t}^{\sigma}|^2 {\rm d}x&= \int_{\mathbb{R}^{N} \backslash B_{2 R}}\left[\frac{1-t}{(1-t)^{2}+|x-t \sigma|^{2}}\right]^{N-2}\,{\rm d}x\\
    &=\frac{1}{(1-t)^{N-2}}\int_{\mathbb{R}^{N} \backslash B_{2 R}}\frac{1}{\left(1+\frac{|x-t\sigma|^2}{(1-t)^2}\right)^{N-2}}\,{\rm d}x\\
    &=(1-t)^2\int_{\Rn\backslash \tilde{B}}\frac{1}{(1+|y|^2)^{N-2}}\,{\rm d}y,
\end{align*}
where in the last step, we have performed the following change of variables:
$$\text{Let } y=\frac{x-t\sigma}{1-t} \text{ and } \tilde{B}= \left(\frac{B_{2R}-t\sigma}{1-t}\right).$$
Now note that 
$y\notin \tilde{B}\implies (1-t)y+t\sigma \notin B_{2R} \implies |y|\geq \frac{2R-t}{1-t} = \frac{2R-1}{(1-t)}+1\geq 2R.$\\Thus
\begin{align*}
    \mathcal{I}_1\;\leq \;\int_{\Rn\backslash B_{2R}}\frac{1}{(1+|y|^2)^{N-2}}\,{\rm d}y.
\end{align*}
Therefore, $\mathcal{I}_1\rightarrow0$ as $R \rightarrow \infty$ independent of $t$ and $\sigma$ for $N \geq 5.$\\

\noi
\textbf{Estimate for $ \mathcal{I}_2 $:}
\noi H\"older's inequality, together with the $ L^{2^*}(\mathbb{R}^N)$-norm invariance of the Aubin--Talenti bubbles, yields
\be\no
\sup_{t,\sigma} \|u_t^{\sigma}\|_{L^2\left(B_{(2R)^{-1}}\right)} \leq \|u_0\|_{L^{2^*}(\Rn)} \left|B\left(0,(2R)^{-1}\right)\right|^{\tfrac{1}{N}}\to 0 \text{ as }R\to\infty.
\ee

 \noi  
Combining all estimates, we conclude that
$
\|w_{t,R}^\sigma - u_t^\sigma\|_{L^2(\mathbb{R}^N)} \to 0 \;\text{ as }\; R \to \infty,
$
uniformly with respect to $ t \in [0,1) $ and $ \sigma \in \mathbb{S}^{N-1} $.
Therefore, invoking the preceding lemma together with \eqref{gradEstimate}, we obtain
$$ w_{t,R}^\sigma \rightarrow u_t^\sigma \;\text{ in }\; \X(\rnn) \text{ as } R\to\infty.$$

\end{proof}

\begin{lemma}\label{lemma:less_than_second_level}
Let us consider the normalized functions
$$
v_{t,R}^{\sigma}=w_{t,R}^\sigma /\left\|w_{t,R}^\sigma\right\|_{L^{2^{*}}(\rnn)}, v_{0,R}=w_{0,R} /\left\|w_{0,R}\right\|_{L^{2^{*}}(\rnn)}.
$$
Then, for sufficiently large $R \gg 1$, there exists a positive constant $S_{1} \in \mathbb{R}$, $N_0 \in \mathbb{N}$ such that
\begin{equation}\label{eq:less_than_second_level}
\sup _{(t,\sigma) \in \[0,1\)\times\mathbb{S}^{N-1}} S\left(v_{t,R}^{\sigma} ; \Omega\right)<S_{1}<2^{2 / N} \So
\end{equation}
\noi
for  all $N\geq N_0$.
\end{lemma}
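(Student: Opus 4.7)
The plan is to pass to the limit $R\to\infty$ using Lemma~\ref{Lem:seminormConvergence}, exploit the translation/dilation invariance of the Aubin--Talenti bubbles to make all $(t,\sigma)$-dependence explicit, reduce the assertion to a single scalar inequality in $N$, and then verify that inequality for large $N$ via an interpolation estimate on the Gagliardo seminorm of $u_0$.

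First, I would combine Lemma~\ref{Lem:seminormConvergence} with the continuous Sobolev embedding $\mathcal{D}^{1,2}(\mathbb{R}^N)\hookrightarrow L^{2^*}(\mathbb{R}^N)$ to conclude that, as $R\to\infty$ and uniformly in $(t,\sigma)\in[0,1)\times\mathbb{S}^{N-1}$, each of $\|\nabla w_{t,R}^\sigma\|_{L^2}^2$, $[w_{t,R}^\sigma]_s^2$, and $\|w_{t,R}^\sigma\|_{L^{2^*}}^2$ converges to the corresponding quantity for $u_t^\sigma$. Since $u_t^\sigma = U[t\sigma, 1-t]$ is an extremal for the classical Sobolev inequality and both $\|\nabla\cdot\|_{L^2}^2$ and $\|\cdot\|_{L^{2^*}}^2$ are invariant under the translation/dilation action, one has $\|\nabla u_t^\sigma\|_{L^2}^2/\|u_t^\sigma\|_{L^{2^*}}^2 = \mathcal{S}_N$, independently of $(t,\sigma)$. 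For the nonlocal part, the substitution $y=(x-t\sigma)/(1-t)$ yields the scaling identity $[u_t^\sigma]_s^2 = (1-t)^{2-2s}[u_0]_s^2$, and therefore the uniform expansion
\begin{equation*}
S(v_{t,R}^\sigma;\Omega) = \mathcal{S}_N + (1-t)^{2-2s}\,\frac{[u_0]_s^2}{\|u_0\|_{L^{2^*}}^2} + o_R(1),
\end{equation*}
with $o_R(1)\to 0$ uniformly in $(t,\sigma)$. Since $2-2s>0$, one has $(1-t)^{2-2s}\leq 1$ on $[0,1)$, so it suffices to show that $\mathcal{S}_N + [u_0]_s^2/\|u_0\|_{L^{2^*}}^2 < 2^{2/N}\mathcal{S}_N$ for all $N\geq N_0$; once this strict inequality holds, fixing $R$ large enough to absorb $o_R(1)$ into the gap produces any valid constant $S_1$.

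The main obstacle is this dimensional inequality. Using $\mathcal{S}_N = \|\nabla u_0\|_{L^2}^2/\|u_0\|_{L^{2^*}}^2$, it is equivalent to
\begin{equation*}
\frac{[u_0]_s^2}{\|\nabla u_0\|_{L^2}^2} < 2^{2/N}-1 \sim \frac{2\log 2}{N} \quad\text{as } N\to\infty.
\end{equation*}
To bound the left-hand side, I would use the Fourier-side interpolation inequality
\begin{equation*}
[u_0]_s^2 \leq C(N,s)\,\|u_0\|_{L^2}^{2(1-s)}\,\|\nabla u_0\|_{L^2}^{2s},
\end{equation*}
obtained by applying H\"older with exponents $1/(1-s)$ and $1/s$ to $\int|\xi|^{2s}|\widehat{u_0}(\xi)|^2\,d\xi$; this is meaningful once $N\geq 5$, in which case $u_0\in L^2(\mathbb{R}^N)$. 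A direct Beta-function computation with $u_0=(1+|x|^2)^{-(N-2)/2}$ gives
\begin{equation*}
\frac{\|u_0\|_{L^2}^2}{\|\nabla u_0\|_{L^2}^2} = \frac{4(N-1)}{N(N-2)(N-4)} = O(N^{-2}),
\end{equation*}
so $[u_0]_s^2/\|\nabla u_0\|_{L^2}^2 \leq C(N,s)\cdot O(N^{-2(1-s)})$. Controlling the growth of the Fourier constant $C(N,s)$ via Stirling estimates, this quantity decays strictly faster than $N^{-1}$ and is therefore smaller than $2^{2/N}-1$ for all $N\geq N_0$; this selects $N_0$, and the previous step then fixes $R$ depending on $N$.

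The delicate part --- and the source of the authors' remark regarding a potentially sharper threshold $N_0$ --- is tracking the interpolation constant $C(N,s)$ with enough precision to dominate the quantity $2^{2/N}-1$, which vanishes only linearly in $N^{-1}$; a cruder estimate on $C(N,s)$ would artificially inflate $N_0$, and a sharper argument exploiting the explicit Bessel-type Fourier representation of $u_0$ may be required to pin down the best dimensional threshold.
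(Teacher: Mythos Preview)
Your proposal is correct and follows essentially the same route as the paper: reduce via Lemma~\ref{Lem:seminormConvergence} and the scaling identity $[u_t^\sigma]_s^2=(1-t)^{2-2s}[u_0]_s^2$ to the single dimensional inequality $[u_0]_s^2<(2^{2/N}-1)\|\nabla u_0\|_{L^2}^2$, then control $[u_0]_s^2$ by an $L^2$--$H^1$ interpolation of the form $[u_0]_s^2\lesssim_{N,s}\|u_0\|_{L^2}^{2(1-s)}\|\nabla u_0\|_{L^2}^{2s}$ and conclude via Stirling. The only difference is that the paper derives this interpolation with the explicit real-variable constant $\omega_{N-1}2^{-s}/[s(1-s)]$ from \cite[Theorem~6.21]{giovanni} and then carries out the Gamma-function asymptotics in full as a separate Lemma~\ref{lem:crucial_lemma}, whereas you obtain it by H\"older on the Fourier side; since the Fourier normalisation constant relating $[\,\cdot\,]_s^2$ to $\int|\xi|^{2s}|\hat u|^2$ is itself a multiple of $\omega_{N-1}$ (hence decays super-exponentially in $N$), the two computations are equivalent.
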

To establish Lemma~\ref{lemma:less_than_second_level}, we prove the following crucial estimate, which plays a key role in the analysis of the Coron-type problem addressed in this section.

\begin{lemma}\label{lem:crucial_lemma}
Let $u_0$ be as defined in \eqref{eq:AubinTalentiBubbles}. Then for sufficiently large $N$, the following inequality holds
$$
\frac{\|\nabla u_0\|^2_{L^2(\mathbb{R}^N)} + (1-t)^{2-2s} [u_0]^2_{s}}{\|u_0\|^2_{L^{2^*}(\mathbb{R}^N)}} < 2^{2/N} \So.
$$
\end{lemma}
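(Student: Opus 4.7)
The plan is to reduce the inequality to an asymptotic comparison as $N \to \infty$. Since $2-2s > 0$ and $t \in [0,1)$, we have $(1-t)^{2-2s} \leq 1$, with equality at $t=0$; moreover, $u_0$ attains the classical Sobolev constant on $\mathbb{R}^N$, so $\|\nabla u_0\|_{L^2(\mathbb{R}^N)}^2/\|u_0\|_{L^{2^*}(\mathbb{R}^N)}^2 = \So$. Hence the worst case is $t=0$, and the lemma reduces to showing
\[
\frac{[u_0]_s^2}{\|u_0\|_{L^{2^*}(\mathbb{R}^N)}^2} < (2^{2/N}-1)\,\So
\]
for all $N$ sufficiently large.

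For the right-hand side I would apply Stirling's formula to the explicit expression $\So = N(N-2)\pi\,(\Gamma(N/2)/\Gamma(N))^{2/N}$ recalled in Section~\ref{rem:propSnrecall}, obtaining $(\Gamma(N/2)/\Gamma(N))^{2/N} \sim e/(2N)$ and therefore $\So \sim \pi e N/2$; combined with $2^{2/N}-1 \sim (2\ln 2)/N$ this gives $(2^{2/N}-1)\So \to \pi e \ln 2 > 0$. For the left-hand side I would rewrite the seminorm in polar form
\[
[u_0]_s^2 = \int_0^\infty r^{-1-2s}\int_{\mathbb{S}^{N-1}} \|u_0 - u_0(\cdot + r\sigma)\|_{L^2(\mathbb{R}^N)}^2\,d\sigma\,dr
\]
and bound the inner integrand by the elementary estimates $\|u_0 - u_0(\cdot + r\sigma)\|_{L^2}^2 \leq r^2 \int_{\mathbb{R}^N} |\sigma\cdot\nabla u_0|^2\,{\rm d}x$ for small $r$ and $\leq 4\|u_0\|_{L^2}^2$ for large $r$. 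Integrating the first over $\mathbb{S}^{N-1}$ using the identity $\int_{\mathbb{S}^{N-1}}(\sigma\cdot e_1)^2\,d\sigma = \omega_{N-1}/N$ produces the crucial prefactor $\omega_{N-1}/N$ (rather than just $\omega_{N-1}$), and splitting the $r$-integral at the optimal threshold $r^\ast = 2\sqrt{N}\,\|u_0\|_{L^2}/\|\nabla u_0\|_{L^2}$ yields the interpolation bound
\[
[u_0]_s^2 \leq \frac{4^{1-s}\,\omega_{N-1}}{2s(1-s)\,N^s}\,\|\nabla u_0\|_{L^2(\mathbb{R}^N)}^{2s}\,\|u_0\|_{L^2(\mathbb{R}^N)}^{2(1-s)}.
\]

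The three norms of $u_0$ are then computed in closed form via Beta-function integrals: $\|u_0\|_{L^{2^*}}^{2^*} = \pi^{N/2}\Gamma(N/2)/\Gamma(N)$, $\|u_0\|_{L^2}^2 = \tfrac{4(N-1)}{N-4}\,\|u_0\|_{L^{2^*}}^{2^*}$, and $\|\nabla u_0\|_{L^2}^2 = N(N-2)\,\|u_0\|_{L^{2^*}}^{2^*}$, so that $\|u_0\|_{L^{2^*}}^{2^*-2} = \pi\,(\Gamma(N/2)/\Gamma(N))^{2/N} \sim \pi e/(2N)$. Substituting into the interpolation and simplifying gives $[u_0]_s^2/\|u_0\|_{L^{2^*}}^2 \lesssim_s \omega_{N-1}\,N^{s-1}$; since $\omega_{N-1} = 2\pi^{N/2}/\Gamma(N/2) \sim (N/\pi)^{1/2}(2\pi e/N)^{N/2}$ decays super-exponentially, the left-hand side tends to zero while the right-hand side tends to $\pi e\ln 2 > 0$, and the inequality holds for every $N \geq N_0$. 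The main obstacle is precisely this last step: a naive application of Lemma~\ref{LeoniIneq} (formally extended to $s_2 = 1$) with a dimension-independent constant only produces $[u_0]_s^2/\|u_0\|_{L^{2^*}}^2 = O(N^{2s-1})$, which blows up for $s > 1/2$ and cannot defeat the bounded limit on the right; it is the spherical-averaging identity that extracts the super-exponentially small factor $\omega_{N-1}$, and this is the essential analytic input making the argument uniform in $s \in (0,1)$.
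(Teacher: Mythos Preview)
Your proposal is correct and follows the same overall strategy as the paper: reduce to the worst case $t=0$, bound $[u_0]_s^2$ by an interpolation of the form $C\,\omega_{N-1}\,\|u_0\|_{L^2}^{2(1-s)}\|\nabla u_0\|_{L^2}^{2s}$, and conclude via Stirling's formula that the left-hand side tends to $0$ while the right-hand side stays bounded away from $0$.

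The differences are in execution rather than in idea. The paper simply quotes the interpolation bound $[u_0]_s^2 \le \frac{\omega_{N-1}2^{-s}}{s(1-s)}\|u_0\|_{L^2}^{2(1-s)}\|\nabla u_0\|_{L^2}^{2s}$ from \cite[Theorem~6.21]{giovanni} (optimizing over the free parameter $\ell$), whereas you rederive it by hand via polar coordinates and the spherical-averaging identity $\int_{\mathbb{S}^{N-1}}(\sigma\cdot e)^2\,d\sigma=\omega_{N-1}/N$, gaining an additional factor $N^{-s}$. That refinement is pleasant but not needed: the $\omega_{N-1}$ prefactor---which the paper's cited bound already carries---decays super-exponentially, so even $\omega_{N-1}N^{2s-1}\to 0$. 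Your asymptotic endgame is also somewhat cleaner than the paper's: you compute directly that $(2^{2/N}-1)\So\to\pi e\ln 2$ and show the left side tends to $0$, while the paper instead manipulates the inequality into the form $[\,\cdot\,]^{1/(1-s)}\le \So^2 R(N)$ and tracks the pieces separately, showing $\omega_{N-1}/(2^{2/N}-1)\to 0$ and $R(N)\to 1/(\pi^2e^2)$. Both routes arrive at the same conclusion for the same reason.
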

\begin{proof}
It is enough to show that
\begin{equation}
    [u_0]^2_s < \left(2^{2/N} - 1\right) \int_{\mathbb{R}^N} |\nabla u_0|^2 {\rm d}x\label{UpperBoundGaglNorm}
\end{equation}
Firstly, we analyze the left-hand side.

\noi
\medskip
The fractional norm is given by
\begin{equation*}
    [u_0]^2_s = \iint_{\mathbb{R}^{2N}} \frac{|u_0(x) - u_0(y)|^2}{|x-y|^{N+2s}} \,{\rm d}y {\rm d}x.
\end{equation*}
Applying standard integral estimates as obtained in \cite[Theorem 6.21]{giovanni}, we get
\begin{equation*}
    [u_0]^2_s \leq \frac{\omega_{N-1}}{s \ell^{2s}} \int_{\mathbb{R}^N} |u_0(x)|^2 {\rm d}x + \frac{\omega_{N-1} \ell^{2(1-s)}}{2(1-s)} \int_{\mathbb{R}^N} |\nabla u_0(x)|^2 {\rm d}x,
\end{equation*}
where $\omega_N = \frac{2 \pi^{\frac{N+1}{2}} }{\Gamma \left( \frac{N+1}{2}\right)},\;\ell>0$.
Rearranging the terms,
\begin{equation*}
  [u_0]^2_s \leq\frac{\omega_{N-1}}{2} \left( \frac{2}{s \ell^{2s}} \int_{\mathbb{R}^N} |u_0(x)|^2 {\rm d}x + \frac{\ell^{2(1-s)}}{1-s} \int_{\mathbb{R}^N} |\nabla u_0(x)|^2 {\rm d}x \right).
\end{equation*}

\noi
\medskip
Now, consider the following notations
\begin{equation*}
    A = \frac{2}{s} \int_{\mathbb{R}^N} |u_0(x)|^2 {\rm d}x, \quad B = \frac{1}{1-s} \int_{\mathbb{R}^N} |\nabla u_0(x)|^2 {\rm d}x,
\end{equation*}
\begin{equation*}
    a = 2s, \quad b = 2(1-s),
\end{equation*}
we obtain
\begin{equation*}
    [u_0]^2_s \leq \frac{\omega_{N-1}}{2} g(\ell),
\end{equation*}
where
\begin{equation*}
    g(\ell) = A \ell^{-a} + B \ell^b,\;\; \ell>0.
\end{equation*}
Taking the infimum over all $ \ell > 0 $, we get
\begin{align*}
    [u_0]^2_s& \leq \frac{\omega_{N-1}}{2}\operatorname{inf}_{l>0}g(l)=\frac{\omega_{N-1}}{2} \left( \left(\frac{b}{a}\right)^{\frac{a}{a+b}} + \left(\frac{a}{b}\right)^{\frac{b}{a+b}} \right) A^{\frac{b}{a+b}} B^{\frac{a}{a+b}}.\\
    &= \frac{\omega_{N-1}}{2} \left( \left(\frac{2(1-s)}{2s}\right)^{s} + \left(\frac{2s}{2(1-s)}\right)^{1-s} \right) \left(\frac{2}{s} \int_{\mathbb{R}^N} |u_0(x)|^2 {\rm d}x\right)^{1-s} \\
   & \,\,\times \left(\frac{1}{1-s} \int_{\mathbb{R}^N} |\nabla u_0(x)|^2 {\rm d}x\right)^s.
\end{align*}
Thus,

\begin{equation*}
    [u_0]_s^2 \leq \frac{\omega_{N-1} 2^{-s}}{s (1 - s)} \left( \int_{\mathbb{R}^N} |u_0(x)|^2  \,{\rm d}x \right)^{1 - s} \left( \int_{\mathbb{R}^N} |\nabla u_0(x)|^2  \,{\rm d}x\right)^s.
\end{equation*}
\noi
Now, the inequality \eqref{UpperBoundGaglNorm} holds if the following condition is met:

\begin{equation*}
    \frac{\omega_{N-1} 2^{-s}}{s (1 - s)} \left( \int_{\mathbb{R}^N} |u_0(x)|^2  \,{\rm d}x\right)^{1 - s} \left( \int_{\mathbb{R}^N} |\nabla u_0(x)|^2  \,{\rm d}x\right)^s \leq \left( 2^{\frac{2}{N}} - 1 \right) \left( \int_{\mathbb{R}^N} |\nabla u_0|^2  \,{\rm d}x\right).
\end{equation*}
\noi
Rearranging the terms,

\begin{equation*}
    \left[\frac{\omega_{N-1} 2^{-s}}{s (1 - s) \left( 2^{\frac{2}{N}} - 1 \right)} \right]^{\frac{1}{1-s}}\leq \frac{\int_{\mathbb{R}^N} |\nabla u_0|^2  \,{\rm d}x}{\int_{\mathbb{R}^N} |u_0|^2 \,{\rm d}x}. 
\end{equation*}

\noi
Now, evaluating the terms on the right-hand side of the above computation, we get

\begin{equation*}
    \int_{\mathbb{R}^N} |u_0(x)|^2 \,{\rm d}x = \omega_{N-1} \int_0^{\infty} \frac{r^{N-1}}{(1 + r^2)^{N-2}} \, dr
= \omega_{N-1} \frac{\Gamma\left(\frac{N}{2} -2\right) \Gamma\left( \frac{N}{2} \right)}{2 \Gamma(N-2)}.
\end{equation*}
\noi
Similarly, evaluating the integral of the gradient term,

\begin{equation*}
    \int_{\mathbb{R}^N} |\nabla u_0(x)|^2 \,{\rm d}x = \So \left( \int_{\mathbb{R}^N} |u_0(x)|^{2^*} \,{\rm d}x \right)^{2/2^*}.
\end{equation*}
\noi
Performing the integral computation,

\begin{equation*}
    \int_{\mathbb{R}^N} |\nabla u_0(x)|^2 \,{\rm d}x = \So \left( \omega_{N-1} \int_0^{\infty} \frac{r^{N-1} \,dr}{(1 + r^2)^N} \right)^{(N-2)/N}.
\end{equation*}
\noi
Simplifying further,

\begin{equation*}
    \int_{\mathbb{R}^N} |\nabla u_0(x)|^2 \,{\rm d}x = \frac{\So^2}{N (N - 2) \pi} \left( \frac{\Gamma(N)}{\Gamma(N/2)} \right)^{2/N} \left( \omega_{N-1} \frac{2^{-N} \sqrt{\pi} \Gamma(N/2)}{\Gamma\left( \frac{1+N}{2} \right)} \right)^{(N-2)/N}.
\end{equation*}
We have
\begin{align} \label{eq:bound}
\left[\frac{\omega_{N-1} 2^{-s}}{s (1 - s) \left( 2^{\frac{2}{N}} - 1 \right)} \right]^{\frac{1}{1-s}} \leq \So^2 \;\underbrace{2^{3-\frac{2}{N} - N} \pi^{-\frac{3}{2} - \frac{1}{N}} \frac{\Gamma(-2+N) \left( \frac{\Gamma(N)}{\Gamma(N/2)} \right)^{2/N} \Gamma\left(\frac{1+N}{2}\right)^{-1 + 2/N}}{N(N-2) \Gamma(-2 + \frac{N}{2})}}_{:=R(N)}.
\end{align}
\noi
We claim that for inequality \eqref{eq:bound}
\begin{equation*}
    \lim_{N\to \infty} \frac{\text{LHS}}{\text{RHS}} = 0.
\end{equation*}
\noi
Using the Taylor expansion, we note that
\begin{equation*}
    2^{2/N} = 1 + \frac{2}{N} \log(2) + O\left(\frac{1}{N^2}\right).
\end{equation*}
\noi
Further, recall the Stirling approximation
\begin{equation}
    \lim_{N \to \infty} \frac{\Gamma(N+1)}{\sqrt{2\pi N} \left(\frac{N}{e}\right)^N} = 1.
    \label{eq:stirling}
\end{equation}
\noi
Using the Stirling approximation \eqref{eq:stirling}, we obtain
\begin{align*}
    \lim_{N \to \infty} \frac{\omega_{N-1}}{2^{2/N}-1} &= \lim_{N \to \infty} \frac{2\pi^{N/2}}{\Gamma\left(\frac{N}{2}\right) (2^{2/N}-1)}\\
    &=
    \lim_{N \to \infty} \frac{2\pi^{N/2}}{\left(\frac{1}{N} \left[ 2 \log 2 + O\left(\frac{1}{N}\right) \right] \sqrt{2\pi (N-2)/2} \left(\frac{N-2}{2e}\right)^{(N-2)/2} \right)}\\
      & =\lim_{N \to \infty} \frac{ \pi^{N/2}  N}{\sqrt{2\pi} \log 2  \sqrt{\frac{N-2}{2}} \left(\frac{N-2}{2e}\right)^{(N-2)/2}}=0.
\end{align*}
 \smallskip
 \noi
Next, we recall the duplication formula for the Gamma function:
\begin{equation*}
    \Gamma(N) \Gamma\left(N+\frac{1}{2}\right) = 2^{1-2N} \sqrt{\pi} \, \Gamma(2N).
\end{equation*}
Using this identity, $R(N)$ can be simplified to
\begin{equation*}
    \pi^{-2} 2^2 \frac{(N-4)}{N-2} \frac{1}{2^{4/N}} \frac{1}{\pi^{2/N}} \frac{\left[ \Gamma\left(\frac{N+1}{2}\right) \right]^{4/N}}{N(N-1)}.
\end{equation*}
\noi
Moreover, by applying Stirling’s approximation, one readily obtains
\begin{equation*}
    \lim_{N \to \infty} R(N) = \frac{1}{\pi^2 e^2}.
\end{equation*}
\noindent Furthermore, a similar application of Stirling’s approximation yields
$$\lim_{N\rightarrow \infty}\So=\infty.$$
Thus the claim follows.
\end{proof}
\begin{proof}[Proof of Lemma \ref{lemma:less_than_second_level}] Since $$\left\|w_{t,R}^\sigma-u_{t}^{\sigma}\right\|_{\mathcal{D}^{1,2}(\rnn)} \rightarrow 0 \text{ and }[w_{t,R}^\sigma-u_{t}^{\sigma}]_s \rightarrow 0 \text{ as } R \rightarrow \infty,$$  by Lemma~\ref{Lem:seminormConvergence}, and using the identities
$$\left\|u_{t}^{\sigma}\right\|_{\mathcal{D}^{1,2}(\rnn)}=\left\|u_{0}\right\|_{\mathcal{D}^{1,2}(\rnn)} \text{ and } [u_t^\sigma]_s=(1-t)^{1-s}[u_0]_s, $$we deduce that
$$
S\left(v_{t,R}^{\sigma} ; \Omega\right)=S\left(w_{t,R}^\sigma ; \Omega\right) \rightarrow S\left(u_{t}^{\sigma} ; \mathbb{R}^{N}\right)=\frac{\lVert \grad u_0 \rVert_{L^2(\rnn)}^2 + (1-t)^{2-2s}[u_0]_s^2}{\lVert u_0 \rVert_{L^{2^*}(\rnn)}^2}
$$
\noi
as $R \rightarrow \infty$, uniformly in $\sigma \in \mathbb{S}^{N-1}, 0 \leq t<1$. 
\noi
Moreover, Lemma~\ref{lem:crucial_lemma} ensures the existence of an integer $N_0 \equiv N_0(s)\in \mathbb{N}$ such that for all $ N \geq N_0$, the following strict inequality holds:
$$\frac{\lVert \grad u_0 \rVert_{L^2(\rnn)}^2 + (1-t)^{2-2s}[u_0]_s^2}{\lVert u_0 \rVert_{L^{2^*}(\rnn)}^2}< 2^{\frac{2}{N}}\So, \;\; \forall N \geq  N_0.$$
This concludes the proof.
\end{proof}
Having established the key preparatory results, we are now in a position to complete the proof of Theorem~\ref{thm:high_energy_soln}. We proceed by applying a deformation argument to lower the energy in a neighbourhood of the first level and subsequently construct a contraction of the unit sphere $ \mathbb{S}^{N-1} $ into $\overline{\Omega} $ using the two-parameter family introduced earlier.

\begin{proof}[Proof of Theorem \ref{thm:high_energy_soln}]
Let us fix a value of $R$ and assume, for the sake of contradiction, that the problem \eqref{Maineq} with $\lambda =0$ does not admit a positive solution. By Theorem \ref{PS_decomposition}, the functional $ I_{0,s} $ satisfies the Palais-Smale condition $(PS)_{\beta}$ on the space $ \X(\Omega) $ for $ \frac{1}{N} \So^{N / 2} < \beta < \frac{2}{N} \So^{N / 2} $. Consequently, $ S(\cdot ; \Omega) $ satisfies the Palais-Smale condition $(PS)_{\beta}$ on $\mathcal{M}$ for $ \So < \beta < 2^{2/N} \So $, where $\mathcal{M}$ denotes the $L^{2^*}(\Omega)$ unit sphere in $\X(\Omega)$, defined as
\begin{equation*}
    \mathcal{M}=\left\{u \in \X(\Omega) : \int_{\Omega}|u|^{2^{*}} d x=1\right\}.
\end{equation*}

Furthermore, $ S(\cdot; \Omega) $ does not admit a critical value within this range.

\smallskip
\noi Now, for any $ \beta \in (\So, 2^{2/N} \So) $, by the deformation lemma \cite[Theorem II.3.11]{Struwe-VM}, there exists $ \varepsilon > 0 $ and a flow $ \Phi_{\beta}: \mathcal{M} \times [0,1] \to \mathcal{M} $ such that
$$
\Phi_{\beta}\left(M_{\beta+\varepsilon}, 1\right) \subset M_{\beta-\varepsilon},
$$
\noi where $M_{\beta}=\{u \in \mathcal{M} : S(u; \Omega)<\beta\}$.

\smallskip
\noindent
For any $ \delta > 0 $, by compactness, we can cover the interval $ [\So + \delta, S_1] $ by finitely many such $ \varepsilon $-intervals and compose the corresponding deformations $ \Phi_{\beta} $ to construct a flow $ \Phi: \mathcal{M} \times [0, 1] \to \mathcal{M} $ such that
$$
\Phi\left(M_{S_1}, 1\right) \subset M_{\So + \delta},
$$
where $ S_1 $ is as defined in \eqref{eq:less_than_second_level}. Additionally, we may assume that during the deformation for all $t\in [0,1]$ we have $ \Phi(u, t) = u $ for all $ u $ with $ S(u; \Omega) \leq \So + \frac{\delta}{2}$.

\smallskip
\noindent Consider the \emph{center of mass} map $F : \mathcal{M} \to \mathbb{R}^N$ defined as
\begin{equation*}
F(u)= \frac{\int_{\Omega} x|\nabla u|^{2} {\rm d}x}{\int_{\om} |\grad u|^2{\rm d}x}.
\end{equation*}
\noi
\noindent We claim that for any open neighborhood $U$ of $\bar{\Omega}$, there exists a $\delta > 0$ such that
$$ F(M_{\So + \de}) \subset U.$$ \noi 
Suppose, on the contrary, that no such $\delta > 0$ exists. Then, for every $k \in \mathbb{N}$, we have $F(M_{\So + \frac{1}{k}}) \not\subset U$. Consequently, there exists a sequence $\{ u_k \}$ with $u_k \in M_{\So + \frac{1}{k}}$ such that $F(u_k) \notin U$. \\
Since $S(u_k; \Omega) < \So + \frac{1}{k}$ and by definition $S(u_k; \Omega) \geq \So$, we conclude
$$
\lim_{k \to \infty} S(u_k; \Omega) = \So,
S(u_k;\om) \geq \So.$$ This gives 
$$\displaystyle \lim_{k \to \infty} S(u_k;\om) = \So,$$
i.e., $\{u_k\}$ is a minimizing sequence for $\So$.

\smallskip
\noindent By Ekeland's Variational Principle \cite[Theorem 8.5]{willem2012minimax}, there exists a Palais-Smale sequence $\{v_k\}$ for $S(\cdot; \Omega)$ at the level $\So$. By Step 1 of Theorem \ref{PS_decomposition}, we have that $\{v_k\}$ is bounded in $\X(\Omega)$, and up to a subsequence, $v_k \rightharpoonup v$ weakly in $\X(\Omega)$. Setting $w_k := \So^{1/(2^*-2)} v_k$, we obtain a Palais-Smale sequence for the functional $I_{0,s}$ at the level $\frac{1}{N} \So^{N/2}$. Once again, Theorem \ref{PS_decomposition} ensures $I_{0,s}\left(\So^{1/(2^*-2)} v\right) \leq \frac{1}{N} \So^{N/2}.$ However, Lemma~\ref{signChangSolnEnergy} asserts that the energy of any sign-changing critical point of $I_{0,s}$ is at least $\frac{2}{N} \So^{N/2}$. Hence, we must have $v \geq 0$. By applying Theorem~8.4 of \cite{garain2022regularity}, and using the equivalence of critical points of $I_{0,s}$ and $S(\cdot; \Omega)$, we conclude that either $v \equiv 0$ or $v > 0$. Thus our assumption enforces $v \equiv 0$.
\smallskip
\noindent Moreover, since $\|v_k\|_{L^{2^*}(\Omega)} = 1$ and
$$
\int_{\Omega} |\nabla v_k|^2\,{\rm d}x + [v_k]_s^2 \to \So \quad \text{as } k \to \infty,
$$
we infer that $\|\nabla v_k\|_{L^2(\Omega)}$ is bounded. By Lemma \ref{compact_embedding}, it follows that $[v_k]_s^2 \to 0$ as $k \to \infty$, and thus
$$
\|\nabla v_k\|^2_{L^2(\Omega)} \to \So.
$$

\smallskip
\noindent Additionally, we consider the weak convergence of measures:
$$
|\nabla v_k|^2 \, dx \rightharpoonup \mu, \quad |v_k|^{2^*} \, dx \rightharpoonup \nu,
$$
as $k \to \infty$ in the space of Radon measures.
Then, using the identity $\So^{-1} \|\mu\| = \|\nu\|^{2/2^*}$ and Lemma 1.40 in \cite{willem2012minimax}, we deduce that both $\mu$ and $\nu$ are concentrated at a single point, say $x^{(0)}$. Hence, up to a subsequence,
\begin{equation*}
|v_k|^{2^*} \, {\rm d}x \rightharpoonup \delta_{x^{(0)}}, \;\;
|\nabla v_k|^2 \, {\rm d}x \rightharpoonup \So \delta_{x^{(0)}}.
\end{equation*}

\smallskip
\noindent It follows that
$$
F(v_k) = \frac{\int_{\Omega} x |\nabla v_k|^2 \, dx}{\int_{\Omega} |\nabla v_k|^2 \, dx} \to \frac{\So x^{(0)}}{\So} = x^{(0)}.
$$

\noindent Since the topology of $\rnn$ is normal, there exists an open set $V$ such that $\bar{\om} \subset V \subset \bar{V} \subset \subset U$. Because $F(u_k) \notin U$ and $|F(u_k) - F(v_k)| \to 0$ as $k \to \infty$, it follows that
$$
x^{(0)} \notin \bar{\Omega},
$$
yielding a contradiction. Thus, our claim is proved.

\smallskip
\noindent As $\Omega$ is smooth, we can choose an open neighborhood $U_0$ of $\bar{\Omega}$ such that each point $p \in U_0$ admits a unique nearest projection $q=\pi(p) \in \bar{\Omega}$, and the projection map $\pi : U_0 \to \bar{\Omega}$ is continuous.

\smallskip
\noindent Let $\delta > 0$ be chosen as above for this neighborhood $U_0$, and let $\Phi : \mathcal{M} \times [0,1] \to \mathcal{M}$ denote the deformation flow constructed previously. Define the map $h : \mathbb{S}^{N-1} \times [0,1] \to \bar{\Omega}$ by
$$
h(\sigma, t) := \pi\left(F\left(\Phi(v_t^{\sigma}, 1)\right)\right),
$$
where $v_t^{\sigma}$ is the aforementioned $2$-parameter family.

\smallskip
\noindent This map $h$ is  well-defined, continuous and in view of Lemma \ref{lemma:less_than_second_level} it satisfies
$$
\begin{aligned}
& h(\sigma, 0) = \pi\left(F\left(\Phi(v_0, 1)\right)\right) =: x^{(0)} \in \bar{\Omega}, \quad \text{for all } \sigma \in \mathbb{S}^{N-1}, \\
& h(\sigma, 1) = \sigma, \quad \text{for all } \sigma \in \mathbb{S}^{N-1}.
\end{aligned}
$$

\noindent Hence, $h$ defines a contraction of the unit sphere $\mathbb{S}^{N-1}$ into a point $x^{(0)}$ in $\bar{\Omega}$, contradicting \eqref{eq:domain_property_2}.\\[0pt]
\end{proof}

\section{Appendix} 

\begin{lemma}\label{signChangSolnEnergy}
Let $u$ be any sign-changing solution of the problem \eqref{Maineq} with $\lambda = 0$. Then the associated energy satisfies
$$
I_{0,s}(u) \geq \frac{2}{N}\So^{N/2},
$$
where $\So$ denotes the best constant in the Sobolev embedding (see Section~\ref{rem:propSnrecall}).
\end{lemma}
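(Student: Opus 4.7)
The plan is to decompose the energy along the positive and negative parts of $u$ and show that each contributes at least $\tfrac{1}{N}\mathcal{S}_N^{N/2}$. Since $u$ is a sign-changing critical point, both $u^+\not\equiv 0$ and $u^-\not\equiv 0$. Testing the equation against $u$ itself yields the Nehari identity $\rho(u)^2=\int_\Omega|u|^{2^*}\,{\rm d}x$, whence
\begin{equation*}
I_{0,s}(u)=\(\tfrac{1}{2}-\tfrac{1}{2^*}\)\rho(u)^2=\tfrac{1}{N}\int_\Omega|u|^{2^*}\,{\rm d}x=\tfrac{1}{N}\(\|u^+\|^{2^*}_{L^{2^*}}+\|u^-\|^{2^*}_{L^{2^*}}\).
\end{equation*}
Hence it suffices to prove the two separate estimates $\|u^\pm\|^{2^*}_{L^{2^*}(\Omega)}\geq \mathcal{S}_N^{N/2}$.

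To obtain these, I would test the equation against $u^+$ (the case $u^-$ is symmetric) and exploit the fact that $u^+\cdot u^-\equiv 0$. For the local part this gives $\int\nabla u\cdot\nabla u^+\,{\rm d}x=\int|\nabla u^+|^2\,{\rm d}x$, and on the right-hand side $\int|u|^{2^*-2}u\,u^+\,{\rm d}x=\int(u^+)^{2^*}\,{\rm d}x$. The delicate point is the nonlocal bilinear form; writing $u=u^+-u^-$ and expanding,
\begin{equation*}
\langle u,u^+\rangle_s=[u^+]_s^2+\iint_{\mathbb{R}^{2N}}\frac{u^+(x)u^-(y)+u^+(y)u^-(x)}{|x-y|^{N+2s}}\,{\rm d}x\,{\rm d}y\;\geq\;[u^+]_s^2,
\end{equation*}
because the cross term is manifestly nonnegative (this is the standard observation that the fractional Laplacian does \emph{not} commute with taking positive/negative parts, but the sign of the discrepancy is favorable here). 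Combining these identities yields
\begin{equation*}
\rho(u^+)^2=\|\nabla u^+\|_{L^2}^2+[u^+]_s^2\leq\|\nabla u^+\|_{L^2}^2+\langle u,u^+\rangle_s=\int_\Omega(u^+)^{2^*}\,{\rm d}x.
\end{equation*}

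Finally, I invoke Theorem~\ref{thm:best_constant}, which identifies the sharp constant of the mixed inequality \eqref{eq:mixed_inequality} with $\mathcal{S}_N$, to obtain $\mathcal{S}_N\|u^+\|_{L^{2^*}}^{2}\leq\rho(u^+)^2\leq\|u^+\|_{L^{2^*}}^{2^*}$. Since $u^+\not\equiv 0$, this rearranges to $\|u^+\|_{L^{2^*}}^{2^*-2}\geq\mathcal{S}_N$, i.e.\ $\|u^+\|_{L^{2^*}}^{2^*}\geq\mathcal{S}_N^{N/2}$ (using $2^*/(2^*-2)=N/2$). The same reasoning applied to $u^-$ gives $\|u^-\|_{L^{2^*}}^{2^*}\geq\mathcal{S}_N^{N/2}$, and plugging both into the expression for $I_{0,s}(u)$ above produces the desired bound $I_{0,s}(u)\geq\tfrac{2}{N}\mathcal{S}_N^{N/2}$.

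The only genuine obstacle is the nonlocal step: one must confirm that $u^\pm\in\X(\Omega)$ (standard, since $u\in\X(\Omega)$ and truncation preserves both the Dirichlet and Gagliardo seminorms in the appropriate way) so that $u^+$ is an admissible test function, and then verify the sign of the cross term in $\langle u,u^+\rangle_s$. Once this inequality is in place, everything else reduces to the same bookkeeping as in the pure local Brezis–Nirenberg setting, made possible by the fact that the sharp Sobolev constant for the mixed operator coincides with the classical $\mathcal{S}_N$.
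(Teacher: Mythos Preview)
Your proof is correct and follows essentially the same approach as the paper: test the equation against $u^+$ and $u^-$, use the nonnegativity of the nonlocal cross term $\iint \frac{u^+(x)u^-(y)}{|x-y|^{N+2s}}\,{\rm d}x\,{\rm d}y$, and apply the Sobolev inequality to conclude. The only cosmetic difference is organizational: the paper bounds $\|\nabla u^\pm\|_{L^2}^2\geq\mathcal{S}_N^{N/2}$ (using only the classical Sobolev inequality) and then expands $I_{0,s}(u)$ explicitly in terms of $u^+,u^-$ to show it dominates $\tfrac{1}{N}\big(\|\nabla u^+\|_{L^2}^2+\|\nabla u^-\|_{L^2}^2\big)$, whereas you first simplify $I_{0,s}(u)=\tfrac{1}{N}\|u\|_{L^{2^*}}^{2^*}$ via the Nehari identity and then bound $\|u^\pm\|_{L^{2^*}}^{2^*}\geq\mathcal{S}_N^{N/2}$ directly---your route is arguably cleaner since it avoids the long expansion.
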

\begin{proof}
  Let $u$ be a sign-changing solution to the problem \eqref{Maineq} with $\lambda = 0$. Then $u$ satisfies the equation
\begin{equation}
    \begin{cases}
-\Delta u + (-\Delta)^s u = |u|^{2^*-2} u \quad \text{in } \Omega,\\
u=0\text{ in }\Rn\setminus\Omega,
\end{cases}
    \label{mainEq1}
\end{equation}
in the weak sense.
\noi
Taking $v = u^+ := \max\{u, 0\}$ as a test function in \eqref{mainEq1}, we obtain
\begin{equation*}
      \int_{\Omega} \nabla u \nabla u^+ {\rm d}x+ \iint_{\mathbb{R}^{2N}} \frac{(u(x) - u(y))(u^+(x) - u^+(y))}{|x - y|^{N+2s}}{\rm d}x \,{\rm d}y = \int_{\Omega} |u|^{2^*-2} uu^+{\rm d}x.
\end{equation*}
Expanding the above expansion we get,
\begin{equation} 
 \hspace{-0.3cm}\int_{\Omega}|\nabla u^{+}|^{2} {\rm d}x+\iint_{\mathbb{R}^{2N}} \frac{|u^{+}(x) - u^{+}(y)|^{2}}{|x-y|^{N+2s}}{\rm d}x \,{\rm d}y +2 \iint_{\mathbb{R}^{2N}} \frac{u^{+}(x) u^{-}(y)}{|x-y|^{N+2s}}{\rm d}x \,{\rm d}y=\int_{\Omega} |u^{+}|^{2^{*}}{\rm d}x \label{plusTestFunc}.
\end{equation}
Applying the Sobolev inequality to $u^+$ gives
\begin{equation*}
 \int_{\Omega} |\nabla u^{+}|^{2}{\rm d}x  \le\int_{\Omega} |u^{+}|^{2^{*}} {\rm d}x \le \left(\So^{-1} \int_{\Omega} |\nabla u^{+}|^{2}{\rm d}x \right)^{2^{*}/2}.
\end{equation*}
This yields 
\begin{equation}
\So^{N/2} \le \int_{\Omega} |\nabla u^{+}|^{2}{\rm d}x  \label{LowerBdOnEnergy1}.
\end{equation}
Now, taking $v = u^- := \max\{-u, 0\}$ as a test function in \eqref{mainEq1}, we obtain


\begin{equation}
- \int_{\Omega} |\nabla u^{-}|^2 \, {\rm d}x - \left[ u^{-} \right]_{s}^{2} - 2 \iint_{\mathbb{R}^{2N}} \frac{u^{+}(x) u^{-}(y)}{|x-y|^{N+2s}} \, {\rm d}x\,{\rm d}y = - \int_{\Omega} |u^-|^{2^{*}} \, {\rm d}x, \label{MinusTestFn}
\end{equation}

\noindent
Once again, by the Sobolev inequality, we obtain

\begin{equation}
\So^{N/2} \leq \int_{\Omega} |\nabla u^{-}|^2 \, {\rm d}x.\label{LowerBdEnergy2}
\end{equation}

\noi
Now consider the energy functional
\begin{align*}
&I_{0,s}(u) = \frac{1}{2} \int_{\Omega} |\nabla u|^2 \, {\rm d}x + \frac{1}{2} [u]_{s}^{2} - \frac{1}{2^{*}} \int_{\Omega} |u|^{2^{*}} \, {\rm d}x \\
&= \frac{1}{2} \int_{\Omega} |\nabla u^{+}|^{2} \, {\rm d}x + \frac{1}{2} \int_{\Omega} |\nabla u^{-}|^{2} \, {\rm d}x + \frac{1}{2} [u^{+}]_{s}^{2} + \frac{1}{2} [u^{-}]_{s}^{2} - \langle u^{+}, u^{-} \rangle_{s} \\
& \hspace{5cm}- \frac{1}{2^{*}} \int_{\Omega} |u^{+}|^{2^{*}} \, {\rm d}x - \frac{1}{2^{*}} \int_{\Omega} |u^{-}|^{2^{*}} \, {\rm d}x \\
&= \frac{1}{2} \int_{\Omega} |\nabla u^{+}|^{2} \, {\rm d}x + \frac{1}{2} [u^{+}]_{s}^{2} - \frac{1}{2^{*}} \int_{\Omega} |u^{+}|^{2^{*}} \, {\rm d}x + \frac{1}{2} \int_{\Omega} |\nabla u^{-}|^{2} \, {\rm d}x  \\
&\hspace{2cm}+ \frac{1}{2} [u^{-}]_{s}^{2} - \frac{1}{2^{*}} \int_{\Omega} |u^{-}|^{2^{*}} \, {\rm d}x +2 \iint_{\mathbb{R}^{2N}} \frac{u^{+}(x) u^{-}(y)}{|x-y|^{N+2s}} \, {\rm d}x\,{\rm d}y .
\end{align*}

\smallskip
\noindent
Using \eqref{plusTestFunc} and \eqref{MinusTestFn}, we substitute to get

\begin{align*}
I_{0,s}(u) &= \frac{1}{2} \int_{\Omega} |\nabla u^{+}|^{2} \, {\rm d}x + \frac{1}{2} [u^{+}]_{s}^{2} - \frac{1}{2^{*}} \left( \int_{\Omega} |\nabla u^{+}|^{2} \, {\rm d}x + [u^{+}]_{s}^{2} \right. \\
& \left.  + 2 \iint_{\mathbb{R}^{2N}} \frac{u^{+}(x) u^{-}(y)}{|x-y|^{N+2s}} \, {\rm d}x\,{\rm d}y \right)+ \frac{1}{2} \int_{\Omega} |\nabla u^{-}|^{2} \, {\rm d}x + \frac{1}{2} [u^{-}]_{s}^{2}  \\
&- \frac{1}{2^{*}} \left( \int_{\Omega} |\nabla u^{-}|^{2} \, {\rm d}x + [u^{-}]_{s}^{2}  + 2 \iint_{\mathbb{R}^{2N}} \frac{u^{+}(x) u^{-}(y)}{|x-y|^{N+2s}} \, {\rm d}x\,{\rm d}y \right)\\
& + 2 \iint_{\mathbb{R}^{2N}} \frac{u^{+}(x) u^{-}(y)}{|x-y|^{N+2s}} \, {\rm d}x\,{\rm d}y
\end{align*}

\begin{align*}
I_{0,s}(u) &= \frac{1}{N} \int_{\Omega} \left(|\nabla u^{+}|^{2}+ |\nabla u^{-}|^{2}\right) + \frac{1}{N} \left([u^{+}]_{s}^{2}+  [u^{-}]_{s}^{2}\right) + \frac{4}{N} \iint_{\mathbb{R}^{2N}} \frac{u^{+}(x) u^{-}(y)}{|x-y|^{N+2s}} \, {\rm d}x\,{\rm d}y.\\
\end{align*}

\noindent
Finally, using \eqref{LowerBdOnEnergy1} and \eqref{LowerBdEnergy2}, we conclude

\begin{equation*}
I_{0,s}(u) \geq \frac{2}{N} \So^{N/2}.
\end{equation*}
\end{proof}

\noindent{\bf Acknowledgment:} 
S. Chakraborty acknowledges the support of the Chair Professor grant (Project No. MI00148) at IIT Delhi, India. The author, Shammi Malhotra, is supported by the Prime Minister’s Research Fellowship (PMRF ID - 1403226), Diksha Gupta is supported by the PMRF fellowship (PMRF ID - 1401219).

\bibliographystyle{plain}
\bibliography{ref}

\begin{thebibliography}{10}

\bibitem{almaraz_manifolds}
S\'ergio Almaraz.
\newblock The asymptotic behavior of {P}alais-{S}male sequences on manifolds with boundary.
\newblock {\em Pacific J. Math.}, 269(1):1--17, 2014.

\bibitem{Alves1}
Claudianor~O. Alves.
\newblock Existence of positive solutions for a problem with lack of compactness involving the {$p$}-{L}aplacian.
\newblock {\em Nonlinear Anal.}, 51(7):1187--1206, 2002.

\bibitem{alves_global_compact_p_lap}
Claudianor~O. Alves.
\newblock Positive solutions to quasilinear equations involving critical exponent on perturbed annular domains.
\newblock {\em Electron. J. Differential Equations}, pages no. 13, 13, 2005.

\bibitem{aubin_sobolev_constant}
Thierry Aubin.
\newblock Probl\`emes isop\'erim\'etriques et espaces de {S}obolev.
\newblock {\em J. Differential Geometry}, 11(4):573--598, 1976.

\bibitem{bahri_coron_topology}
Abbas Bahri and Jean-Michel Coron.
\newblock On a nonlinear elliptic equation involving the critical {S}obolev exponent: the effect of the topology of the domain.
\newblock {\em Comm. Pure Appl. Math.}, 41(3):253--294, 1988.

\bibitem{kaniska_multiplicity_p_lap}
Giuseppina Barletta, Pasquale Candito, Salvatore~A. Marano, and Kanishka Perera.
\newblock Multiplicity results for critical {$p$}-{L}aplacian problems.
\newblock {\em Ann. Mat. Pura Appl. (4)}, 196(4):1431--1440, 2017.

\bibitem{Bhakta-souptik-pucci}
Mousomi Bhakta, Souptik Chakraborty, and Patrizia Pucci.
\newblock Fractional {H}ardy-{S}obolev equations with nonhomogeneous terms.
\newblock {\em Adv. Nonlinear Anal.}, 10(1):1086--1116, 2021.

\bibitem{biagi}
Stefano Biagi, Serena Dipierro, Enrico Valdinoci, and Eugenio Vecchi.
\newblock A brezis-nirenberg type result for mixed local and nonlocal operators.
\newblock \url{https://arxiv.org/abs/2209.07502}, 2022.

\bibitem{brasco_globalcpt_frac_p_laplacian}
Lorenzo Brasco, Marco Squassina, and Yang Yang.
\newblock Global compactness results for nonlocal problems.
\newblock {\em Discrete Contin. Dyn. Syst. Ser. S}, 11(3):391--424, 2018.

\bibitem{Bre-Nir-CPAMS}
Ha\"im Br\'ezis and Louis Nirenberg.
\newblock Positive solutions of nonlinear elliptic equations involving critical {S}obolev exponents.
\newblock {\em Comm. Pure Appl. Math.}, 36(4):437--477, 1983.

\bibitem{cao_singular_hardy}
Daomin Cao and Shuangjie Peng.
\newblock A global compactness result for singular elliptic problems involving critical {S}obolev exponent.
\newblock {\em Proc. Amer. Math. Soc.}, 131(6):1857--1866, 2003.

\bibitem{chernysh2024struwetypedecompositionresultweighted}
Edward Chernysh.
\newblock A struwe-type decomposition result for weighted critical $p$-laplace equations.
\newblock \url{https://arxiv.org/abs/2410.14861}, 2024.

\bibitem{clapp_entire_nodal}
M\'onica Clapp and Luis Lopez~Rios.
\newblock Entire nodal solutions to the pure critical exponent problem for the {$p$}-{L}aplacian.
\newblock {\em J. Differential Equations}, 265(3):891--905, 2018.

\bibitem{clapp_multiple_soln}
M\'onica Clapp and Tobias Weth.
\newblock Multiple solutions for the {B}rezis-{N}irenberg problem.
\newblock {\em Adv. Differential Equations}, 10(4):463--480, 2005.

\bibitem{coron_topology}
Jean-Michel Coron.
\newblock Topologie et cas limite des injections de {S}obolev.
\newblock {\em C. R. Acad. Sci. Paris S\'er. I Math.}, 299(7):209--212, 1984.

\bibitem{dancer_topology}
Edward~Norman Dancer.
\newblock A note on an equation with critical exponent.
\newblock {\em Bull. London Math. Soc.}, 20(6):600--602, 1988.

\bibitem{sergio_multiple_soln}
Giuseppe Devillanova and Sergio Solimini.
\newblock Concentration estimates and multiple solutions to elliptic problems at critical growth.
\newblock {\em Adv. Differential Equations}, 7(10):1257--1280, 2002.

\bibitem{ding_topology}
Wei~Yue Ding.
\newblock Positive solutions of {$\Delta u+u^{(n+2)/(n-2)}=0$} on contractible domains.
\newblock {\em J. Partial Differential Equations}, 2(4):83--88, 1989.

\bibitem{hamidi_anisotropic}
Abdallah El~Hamidi and J\'er\^ome V\'etois.
\newblock Sharp {S}obolev asymptotics for critical anisotropic equations.
\newblock {\em Arch. Ration. Mech. Anal.}, 192(1):1--36, 2009.

\bibitem{filippucci_hardy}
Roberta Filippucci, Patrizia Pucci, and Fr\'ed\'eric Robert.
\newblock On a {$p$}-{L}aplace equation with multiple critical nonlinearities.
\newblock {\em J. Math. Pures Appl. (9)}, 91(2):156--177, 2009.

\bibitem{alessio_multiple_soln_fractional}
Alessio Fiscella, Giovanni Molica~Bisci, and Raffaella Servadei.
\newblock Bifurcation and multiplicity results for critical nonlocal fractional {L}aplacian problems.
\newblock {\em Bull. Sci. Math.}, 140(1):14--35, 2016.

\bibitem{Gao_hardy}
Wenliang Gao and Shuangjie Peng.
\newblock An elliptic equation with combined critical {S}obolev-{H}ardy terms.
\newblock {\em Nonlinear Anal.}, 65(8):1595--1612, 2006.

\bibitem{garain2022regularity}
P.~Garain and J.~Kinnunen.
\newblock On the regularity theory for mixed local and nonlocal quasilinear elliptic equations.
\newblock {\em Transactions of the American Mathematical Society}, 375(08):5393--5423, 2022.

\bibitem{gerard_ps_fractional}
Patrick G\'erard.
\newblock Description du d\'efaut de compacit\'e{} de l'injection de {S}obolev.
\newblock {\em ESAIM Control Optim. Calc. Var.}, 3:213--233, 1998.

\bibitem{guo_hardy}
Qianqiao Guo, Pengcheng Niu, and Yongzhong Wang.
\newblock Global compactness results for singular quasilinear elliptic problems with critical {S}obolev exponents and applications.
\newblock {\em Nonlinear Anal.}, 71(7-8):2944--2963, 2009.

\bibitem{emmanuel_Paneitz}
Emmanuel Hebey and Fr\'ed\'eric Robert.
\newblock Coercivity and {S}truwe's compactness for {P}aneitz type operators with constant coefficients.
\newblock {\em Calc. Var. Partial Differential Equations}, 13(4):491--517, 2001.

\bibitem{Jaffard_p_frac_laplacian_global_cpt}
St\'ephane Jaffard.
\newblock Analysis of the lack of compactness in the critical {S}obolev embeddings.
\newblock {\em J. Funct. Anal.}, 161(2):384--396, 1999.

\bibitem{jin_hardy}
LingYu Jin and YinBin Deng.
\newblock A global compact result for a semilinear elliptic problem with {H}ardy potential and critical nonlinearities on {$\mathbb{R}^N$}.
\newblock {\em Sci. China Math.}, 53(2):385--400, 2010.

\bibitem{jin_coron_lane_emden}
Sangdon Jin and Seunghyeok Kim.
\newblock Coron's problem for the critical {L}ane-{E}mden system.
\newblock {\em J. Funct. Anal.}, 285(8):Paper No. 110077, 52, 2023.

\bibitem{kang_hardy2}
Dongsheng Kang and Guiqing Li.
\newblock On the elliptic problems involving multi-singular inverse square potentials and multi-critical {S}obolev-{H}ardy exponents.
\newblock {\em Nonlinear Anal.}, 66(8):1806--1816, 2007.

\bibitem{kang_hardy}
Dongsheng Kang and Shuangjie Peng.
\newblock Existence of solutions for elliptic problems with critical {S}obolev-{H}ardy exponents.
\newblock {\em Israel J. Math.}, 143:281--297, 2004.

\bibitem{radulescu_kirchhoff_pd}
Chunyu Lei, Vicen\c tiu~D. R\u{a}dulescu, and Binlin Zhang.
\newblock Low perturbations and combined effects of critical and singular nonlinearities in {K}irchhoff problems.
\newblock {\em Appl. Math. Optim.}, 87(1):Paper No. 9, 38, 2023.

\bibitem{giovanni}
Giovanni Leoni.
\newblock {\em A first course in fractional {S}obolev spaces}, volume 229 of {\em Graduate Studies in Mathematics}.
\newblock American Mathematical Society, Providence, RI, 2023.

\bibitem{Li-Yuan-Guo}
Yuanyuan Li, Qianqiao Guo, and Pengcheng Niu.
\newblock Global compactness results for quasilinear elliptic problems with combined critical {S}obolev-{H}ardy terms.
\newblock {\em Nonlinear Anal.}, 74(4):1445--1464, 2011.

\bibitem{saikat_polyharmonic}
Saikat Mazumdar.
\newblock Struwe's decomposition for a polyharmonic operator on a compact {R}iemannian manifold with or without boundary.
\newblock {\em Commun. Pure Appl. Anal.}, 16(1):311--330, 2017.

\bibitem{mercuri_coron_p_lap}
Carlo Mercuri, Berardino Sciunzi, and Marco Squassina.
\newblock On {C}oron's problem for the {$p$}-{L}aplacian.
\newblock {\em J. Math. Anal. Appl.}, 421(1):362--369, 2015.

\bibitem{mercuri_global_compact_p_lap}
Carlo Mercuri and Michel Willem.
\newblock A global compactness result for the {$p$}-{L}aplacian involving critical nonlinearities.
\newblock {\em Discrete Contin. Dyn. Syst.}, 28(2):469--493, 2010.

\bibitem{Pala-Pis-Cal}
Giampiero Palatucci and Adriano Pisante.
\newblock Improved {S}obolev embeddings, profile decomposition, and concentration-compactness for fractional {S}obolev spaces.
\newblock {\em Calc. Var. Partial Differential Equations}, 50(3-4):799--829, 2014.

\bibitem{Pala-Pis-Non}
Giampiero Palatucci and Adriano Pisante.
\newblock A global compactness type result for {P}alais-{S}male sequences in fractional {S}obolev spaces.
\newblock {\em Nonlinear Anal.}, 117:1--7, 2015.

\bibitem{passaseo_topology}
Donato Passaseo.
\newblock Multiplicity of positive solutions of nonlinear elliptic equations with critical {S}obolev exponent in some contractible domains.
\newblock {\em Manuscripta Math.}, 65(2):147--165, 1989.

\bibitem{peng_time_fractional_pd}
Congming Peng, Kai Wang, and Caochuan Ma.
\newblock Existence of stable standing waves for the critical fractional {S}chr\"odinger equation with an inhomogeneous combined nonlinearity.
\newblock {\em Math. Methods Appl. Sci.}, 48(1):417--434, 2025.

\bibitem{nicolas_global_compact_manifold}
Nicolas Saintier.
\newblock Asymptotic estimates and blow-up theory for critical equations involving the {$p$}-{L}aplacian.
\newblock {\em Calc. Var. Partial Differential Equations}, 25(3):299--331, 2006.

\bibitem{sandeep_manifold}
Kunnath Sandeep and Cyril Tintarev.
\newblock Profile decomposition in {S}obolev spaces of non-compact manifolds.
\newblock {\em NoDEA Nonlinear Differential Equations Appl.}, 26(6):Paper No. 55, 22, 2019.

\bibitem{secchi_coronpb_fractional}
Simone Secchi, Naoki Shioji, and Marco Squassina.
\newblock Coron problem for fractional equations.
\newblock {\em Differential Integral Equations}, 28(1-2):103--118, 2015.

\bibitem{servadei-valdinoci_BrezisNirenbergFractional}
Raffaella Servadei and Enrico Valdinoci.
\newblock The {B}rezis-{N}irenberg result for the fractional {L}aplacian.
\newblock {\em Trans. Amer. Math. Soc.}, 367(1):67--102, 2015.

\bibitem{smets_hardy}
Didier Smets.
\newblock Nonlinear {S}chr\"odinger equations with {H}ardy potential and critical nonlinearities.
\newblock {\em Trans. Amer. Math. Soc.}, 357(7):2909--2938, 2005.

\bibitem{Struwe-Z}
Michael Struwe.
\newblock A global compactness result for elliptic boundary value problems involving limiting nonlinearities.
\newblock {\em Math. Z.}, 187(4):511--517, 1984.

\bibitem{Struwe-VM}
Michael Struwe.
\newblock {\em Variational methods}, volume~34 of {\em Ergebnisse der Mathematik und ihrer Grenzgebiete. 3. Folge. A Series of Modern Surveys in Mathematics}.
\newblock Springer-Verlag, Berlin, fourth edition, 2008.
\newblock Applications to nonlinear partial differential equations and Hamiltonian systems.

\bibitem{talenti_sobolev_constant}
Giorgio Talenti.
\newblock Best constant in {S}obolev inequality.
\newblock {\em Ann. Mat. Pura Appl. (4)}, 110:353--372, 1976.

\bibitem{Tao10}
Terence Tao.
\newblock Concentration compactness via nonstandard analysis.
\newblock \href{http://terrytao.wordpress.com/2010/11/29/concentration-compactness-via-nonstandard-analysis/}{terrytao.wordpress.com}, November 2010.
\newblock Terence Tao Blog: What's new.

\bibitem{zhang_frac_kirchhoff_pd}
Junshan Tian and Binlin Zhang.
\newblock A fractional profile decomposition and its application to {K}irchhoff-type fractional problems with prescribed mass.
\newblock {\em Adv. Nonlinear Anal.}, 13(1):Paper No. 20240029, 25, 2024.

\bibitem{Tin-Fi}
Kyril Tintarev and Karl-Heinz Fieseler.
\newblock {\em Concentration compactness}.
\newblock Imperial College Press, London, 2007.
\newblock Functional-analytic grounds and applications.

\bibitem{jerome_multiple_soln}
J\'er\^ome V\'etois.
\newblock Multiple solutions for nonlinear elliptic equations on compact {R}iemannian manifolds.
\newblock {\em Internat. J. Math.}, 18(9):1071--1111, 2007.

\bibitem{willem2012minimax}
Michel Willem.
\newblock {\em Minimax theorems}, volume~24.
\newblock Springer Science \& Business Media, 2012.

\bibitem{yan_global_compact_p_lap}
Shu~Sen Yan.
\newblock A global compactness result for quasilinear elliptic equations with critical {S}obolev exponents.
\newblock {\em Chinese Ann. Math. Ser. A}, 16(4):397--402, 1995.

\bibitem{zhang_multiple_soln_sobolev_hardy}
Jinguo Zhang and Tsing-San Hsu.
\newblock Multiplicity of positive solutions for a nonlocal elliptic problem involving critical {S}obolev-{H}ardy exponents and concave-convex nonlinearities.
\newblock {\em Acta Math. Sci. Ser. B (Engl. Ed.)}, 40(3):679--699, 2020.

\end{thebibliography}

\end{document}